\title{CONVERGENCE OF A CELL-CENTERED FINITE VOLUME DISCRETIZATION
FOR LINEAR ELASTICITY}
\author{Jan Martin Nordbotten\footnotemark[2]}
\DeclareMathOperator{\card}{card}
\DeclareMathOperator{\tr}{tr}
\DeclareMathOperator{\argmin}{arg\,min}
\def\abs#1{\lvert #1\rvert}
\def\norm#1{\lVert #1\rVert}
\def\dblbr#1{[[#1]]}
\def\aver#1{\langle #1\rangle}
\newcommand*{\om}{\boldsymbol{\omega}}
\newcommand*{\sig}{\boldsymbol{\sigma}}
\def\underkern#1#2#3{
        {} \mskip#1mu \underline{\mskip-#1mu #3 \mskip-#2mu} \mskip#2mu {}
}
\def\overkern#1#2#3{
        {} \mskip#1mu \overline{\mskip-#1mu #3 \mskip-#2mu} \mskip#2mu {}
}
\newcommand*{\unabla}{\underkern11\nabla}
\newcommand*{\bnabla}{\overkern11\nabla}
\newcommand*{\ubnabla}{\underkern11{\overkern11\nabla}}
\newcommand*{\ab}{\boldsymbol{a}}
\newcommand*{\fb}{\boldsymbol{f}}
\newcommand*{\gb}{\boldsymbol{g}}
\newcommand*{\nb}{\boldsymbol{n}}
\newcommand*{\ub}{\boldsymbol{u}}
\newcommand*{\vb}{\boldsymbol{v}}
\newcommand*{\wb}{\boldsymbol{w}}
\newcommand*{\xb}{\boldsymbol{x}}
\newcommand*{\yb}{\boldsymbol{y}}
\newcommand*{\phib}{\boldsymbol{\varphi}}
\newcommand*{\Hb}{\boldsymbol{H}}
\newcommand*{\Ib}{\boldsymbol{I}}
\newcommand*{\Ob}{\boldsymbol{0}}
\newcommand*{\Pb}{\boldsymbol{P}}
\newcommand*{\Sb}{\boldsymbol{S}}
\newcommand*{\Tb}{\boldsymbol{T}}
\newcommand*{\Ub}{\boldsymbol{U}}
\newcommand*{\Cc}{\mathcal{C}}
\newcommand*{\Dc}{\mathcal{D}}
\newcommand*{\Fc}{\mathcal{F}}
\newcommand*{\Gc}{\mathcal{G}}
\newcommand*{\Hc}{\mathcal{H}}
\newcommand*{\Pc}{\mathcal{P}}
\newcommand*{\Sc}{\mathcal{S}}
\newcommand*{\Tc}{\mathcal{T}}
\newcommand*{\Vc}{\mathcal{V}}
\newcommand*{\Hbc}{\boldsymbol{\mathcal{H}}}
\newcommand*{\Bfr}{\mathfrak{B}}
\newcommand*{\Dfr}{\mathfrak{D}}
\newcommand*{\Ffr}{\mathfrak{F}}
\newcommand*{\Ifr}{\mathfrak{T}}
\newcommand*{\Rfr}{\mathfrak{R}}
\newcommand*{\mcell}{\mathfrak{k}}
\newcommand*{\medge}{\mathfrak{z}}
\newcommand*{\mvert}{\mathfrak{s}}
\newcommand*{\bmcell}{\mathfrak{l}}
\newtheorem{assumption}[theorem]{Assumption}
\newtheorem{definitions}[theorem]{Definitions}
\begin{document}
\maketitle

\renewcommand{\thefootnote}{\fnsymbol{footnote}}

\footnotetext[2]{Department of Mathematics,
University of Bergen and
Department of Civil and Environmental Engineering,
Princeton University.}

\renewcommand{\thefootnote}{\arabic{footnote}}

\begin{abstract}
We show convergence of a cell-centered finite volume discretization for
linear elasticity. The discretization, termed the MPSA method, was recently
proposed in the context of geological applications, where cell-centered
variables are often preferred. Our analysis utilizes a hybrid variational
formulation, which has previously been used to analyze finite volume
discretizations for the scalar diffusion equation. The current analysis
deviates significantly from previous in three respects. First, additional
stabilization leads to a more complex saddle-point problem. Secondly,
a discrete Korn's inequality has to be established for the global
discretization. Finally, robustness with respect to the Poisson ratio is
analyzed. The stability and convergence results presented herein provide
the first rigorous justification of the applicability of cell-centered
finite volume methods to problems in linear elasticity.
\end{abstract}

\begin{keywords}\end{keywords}

\begin{AMS}\end{AMS}

\pagestyle{myheadings}
\thispagestyle{plain}
\markboth{JAN MARTIN NORDBOTTEN}{CONVERGENCE OF FINITE VOLUMES FOR LINEAR ELASTICITY}

\section{Introduction}\label{sec1}

We consider the following problem of isotropic (but heterogeneous) linear
elasticity \cite{kik88}
\begin{equation}\label{eq1.1}
\begin{aligned}
&\nabla\cdot\sig+\fb=\Ob && \text{in } \Omega\\ 
&\sig = 2\mu\unabla\ub+\lambda (\tr \unabla \ub)\Ib && \text{in } \Omega\\
&\ub = \gb_D&& \text{on } \Gamma_D\\
&\sig\cdot\nb=\gb_N&& \text{on } \Gamma_N\\
\end{aligned}
\end{equation}
Here the domain $\Omega$ is a bounded connected polygonal subset of $\mathbb{R}^d$, with
boundary $\partial\Omega=\Gamma_D\cup\Gamma_N$. We have introduced the symmetric gradient
operator by the notion $\unabla \ub\equiv(\nabla\ub+\nabla\ub^T )/2$. Furthermore,
let the parameter functions $\fb\in(L^2)^d$ and the Lam\'e parameters
$0<\underline{\mu}\le\mu(\xb)\le\bar{\mu}$ and $\lambda$ be bounded and positive, defined almost
everywhere. If $\Gamma_D$ has positive measure, equations~\eqref{eq1.1} have a unique
weak solution in $(H^1 (\Omega))^d$. Otherwise, if $\Gamma_N=\partial\Omega$, equations~\eqref{eq1.1}
have a unique weak solution in $(H^1 (\Omega))^d/\Rfr(\Omega)$, where $\Rfr$ is
the space of rigid body motions: $\Rfr(\Omega)=\{\ab+\nobreak\om\wedge\nobreak \xb;\ \ab,\om\in\mathbb{R}^d\}$. In the
latter case $\int_\Omega\fb\,d\xb=\int_{\partial\Omega}\gb_N\,d\Sb$ is a necessary
compatibility condition on the data. Without loss of generality,
we will assume, by subtracting any smooth function satisfying the
boundary conditions and correspondingly modifying the right-hand side,
that both $\gb_D=\Ob$ and $\gb_N=\Ob$. We note in particular that we do not consider
transformation which is available for the (simpler) case of homogeneous
Dirichlet boundary conditions, when equations~\eqref{eq1.1} can be recast in a
locally coercive form \cite{kik88}.

In the continuation, it will be convenient to refer directly to the
weak form of equations~\eqref{eq1.1}. We will here, and in the following use the
convention $\Hb^1 (\Omega)\equiv(H^1 (\Omega))^d$, and tacitly assume that the space is
restricted to the homogeneous Dirichlet boundary condition. The weak form
of equations~\eqref{eq1.1} then takes the form (see e.g.~\cite{kik88}): Find $\ub\in\Hb^1 (\Omega)$
such that
\begin{equation}\label{eq1.2}
\int_\Omega2\mu\unabla\ub:\unabla\vb 
+\lambda(\nabla\cdot\ub)(\nabla\cdot\vb)\,d\xb=\int_\Omega\fb\cdot\vb\,d\xb 
\quad \text{for all }
\vb\in\Hb^1 (\Omega) 
\end{equation}

Recently, we proposed to extend cell-centered finite volume methods
for the scalar diffusion equation to analyze finite volume methods for
elasticity. Cell-centered finite volume methods may be advantageous
for problems associated with poro-elastic materials in geological
applications. In particular, it is advantageous to A) Exploit a
similar grid and data structure between the fluid flow and mechanical
discretizations \cite{lem14,nor14}, B) Share the same restrictions on non-matching
grids and hanging nodes for both the flow and mechanical discretizations,
and C) Have explicit force balance and traction at surfaces and grid
faces (often associated with fractures and faults). We are therefore in
particular interested in the generalization of the so-called Multi-Point
Flux Approximation (MPFA)-O method for scalar equations \cite{aav02}. This variant
of finite volume method has proved popular in applications and is also
amenable to theoretical analysis. Most notably, $2^\text{nd}$-order convergence
in potential and $1^\text{st}$-order convergence in flux was established using a
link to mixed finite element methods already in \cite{kla06,whe06}, while analysis of
the method following the discrete functional framework was presented in
\cite{age08}. The generalization of the MPFA-O method to linear elasticity was
formulated for general anisotropic and heterogeneous problems in \cite{nor14b}. It
is there termed the Multi-Point Stress Approximation (MPSA) method,
and was supported by extensive numerical experiments indicating robust
convergence results for a wide range of grids and Poisson ratios. It is the
objective of this paper to provide a theoretical convergence analysis of
this method by generalizing the discrete functional framework for finite
volume methods.

The discrete functional framework for finite volume methods is
detailed in \cite{eym06}. This approach was utilized by \cite{eym07} to develop finite
volume discretizations for the scalar diffusion equation for which
convergence could be proved under quite weak assumptions on the grid and
coefficients. Furthermore, the framework was adapted to non-symmetrical
discretizations in \cite{age08} and \cite{age10} to generalize and prove convergence of
the MPFA methods.

The main obstacle in order to extend the analysis of discretizations
for scalar diffusion equations to discretizations of equations~\eqref{eq1.1} is
to ensure coercivity of the discretization. In particular, the discrete
functional spaces previously used for the scalar problem are conceptually
similar to the Crouzeix--Raviart finite element space. This space does not
satisfy Korn's inequality and thus lacks stability. Our work therefore
extends the spaces to allow for a natural stabilization analogous to
discontinuous Galerkin methods \cite{han02}. Furthermore, to account for the
additional challenges associated with the lack of local coercivity for
equation~\eqref{eq1.1}, we will additionally need to lean on ideas from variational
multiscale \cite{hug98} and discontinuous Galerkin \cite{bre03} methods in our analysis. We
will also address the issue of stability with respect to the Poisson ratio
(so-called numerical locking) by reverting to ideas from mixed methods
\cite{bre91,lem14}.

We note previous work on finite volume methods for elasticity. Most work
where node-centered \cite{bai95} and cell-centered \cite{wen03} finite volume methods
are introduced for elasticity contain only numerical validation. This
includes also recent work on cell-centered methods \cite{nor14b,car14}. When additional
variables are introduced, convergence of finite volume methods has been
established \cite{mar11}. Similarly, convergence has recently been established for
face-valued finite volume methods \cite{lem14}. This latter citation is particularly
important, as the method is furthermore shown to be locking-free. To
the knowledge of this author, this contribution represents the first
rigorous convergence proof for cell-centered finite volume methods on
general grids and heterogeneities. Furthermore, we establish that the
method is locking free for a large class of grids.

The manuscript is structured as follows. In section~\ref{sec2}, we establish
notation and recall the formulation and main results from the
hybrid-variational framework. In section~\ref{sec3}, we define our discrete
mixed variational problem and establish its connection to the MPSA-O
discretization. In section~\ref{sec4}, we establish a local coercivity condition
which guarantees the global coercivity of the discretization. This
represents the key technical obstacle in order to extend previous work
and prove stability of the discretization. In section~\ref{sec5}, we largely
exploit previous work on discrete functional discretizations to obtain
the main convergence result. Section~\ref{sec6} provides detailed comments on the
method, including application to homogeneous Dirichlet problems, reduced
integration on simplex grids, and the corresponding scalar diffusion
discretization. Section~\ref{sec7} details how the local coercivity condition
simplifies to an explicit condition on the mesh for homogeneous problems,
and addresses the issue of numerical locking. Section~\ref{sec8} concludes the
paper.

\section{Discrete functional framework}\label{sec2}
In this section we give the definition of our finite volume mesh and
discrete variables.

\subsection{Finite volume mesh}\label{sec2.1}
Following \cite{age10}, we modify the construction of \cite{eym06}, and denote a finite
volume mesh by the triplet $\Dc=(\Tc,\Fc,\Vc)$, representing the mesh Tessellation,
Faces, and Vertexes, such that:
\begin{itemize}
\item $\Tc$ is a non-overlapping partition of the domain $\Omega$. Furthermore,
let $m_K$ denote the $d$-dimensional measure of $K\in\Tc$.

\item $\Fc$ is a set of faces of the partitioning $\Tc$. We consider only cases
where elements $\sigma\in\Fc$ are subsets of $d-1$ dimensional hyper-planes
of $\mathbb{R}^d$, and all elements $\sigma\in\Fc$ we associate the $d-1$ dimensional
measure $m_\sigma$. Naturally, the faces must be compatible with the
mesh, such that for all $K\in\Tc$ there exists a subset $\Fc_K\subset\Fc$ such
that $\partial K=\bigcup_{\sigma\in\Fc_K}\sigma$.

\item $\Vc$ is a set of vertexes of the partitioning $\Tc$. Thus for any $d$
faces $\sigma_i\in\Fc$, either their intersection is empty or $\bigcap_i\sigma_i
=s\in\Vc$.
\end{itemize}

Note that in the above (and throughout the manuscript), we abuse notation
by referring to the object and the index by the same notation. E.g.,
we will by $K\in\Tc$ allow $K$ to denote the index, as in $\Fc_K$, but also the
actual subdomain of $\Omega$, such that the expression $\partial K$ is meaningful.

Additionally, we state the following useful subsets of the mesh triplet,
which allows us to efficiently sum over neighboring cells, faces or
vertexes:
\begin{itemize}
\item For each cell $K\in\Tc$, we denote the faces that comprise its boundary
by $\Fc_K$ and the vertexes of $K$ by $\Vc_K$. We will associate with each
corner $s\in\Vc_K$ a subcell of $K$, identified by $(K,s)$, with a volume
$m_K^s$ such that $\sum_{s\in\Vc_K}m_K^s =\nobreak m_K$.

\item For each face $\sigma\in\Fc$, we denote the neighboring cells $\Tc_\sigma$ and
its corners for $\Vc_\sigma$. Note that for all internal faces $\Tc_\sigma$ will
contain exactly two elements, while it contains a single element
when $\sigma\subset\partial\Omega$. We will associate with each corner $s\in\Vc_\sigma$ a
subface of $\sigma$, identified by $(s,\sigma)$, with an area $m_\sigma^s$ such
that $\sum_{s\in\Vc_\sigma}m_\sigma^s =m_\sigma$.

\item For each vertex $s\in\Vc$, we denote the adjacent cells by $\Tc_s$ and
the adjacent faces by $\Fc_s$.
\end{itemize}

We associate for each element $K\in\Tc$ a unique point (cell center) $\xb_K\in K$
such that $K$ is star-shaped with respect to $\xb_K$, and we denote the
diameter of $K$ by $d_K$. Furthermore, we denote the distance between cell
centers $\xb_K$ and $\xb_L$ as $d_{K,L}=\abs{\xb_K-\xb_L}$. The grid diameter is denoted
$h=\max_{K\in\Tc}d_K$.

We associate with each face $\sigma$ its outward normal vector with respect
to the cell $K\in\Tc_\sigma$ as $\nb_{K,\sigma}$, and the Euclidian distance to the cell
center $d_{K,\sigma}$. For each subface $(s,\sigma)$ we denote the subface center as
$\xb_\sigma^s$ and the smallest set of Gauss quadrature points sufficient for
exact integration of second-order polynomials on $(s,\sigma)$ as $\Gc_\sigma^s$. For
each quadrature point $\beta\in\Gc_\sigma^s$ we associate the position $\xb_\beta$ and
weight $\omega_\beta$.

We associate for each vertex $s\in\Vc$ its coordinate $\xb_s\in\Omega$.

The above definition covers all 2D grids of interest. However, we place
two restrictions on grids in 3D: Firstly, the above definition of a mesh
requires all cell faces to be planar. The analysis that follows can be
extended, at the cost of extra notation, to the case of non-planar faces
can be allowed for as long as the faces allow for a piece-wise planar
subdivision associated with the face corners $\Vc_\sigma$. Secondly, we will
require that no more than three faces meet at a vertex. This permits
quadrilaterals, simplexes, and all so-called $2.5$-D grids (e.g.\ 2D
horizontal grids extended vertically, as common in the petroleum
industry). However, as an example we do not permit certain 3D grids
such as pyramids. The formulation of the method readily generalizes to
this case, however the application of Korn's inequality (section~\ref{sec4.3})
becomes more technical.

Regularity assumptions on the discretization $\Dc$ are detailed elsewhere
(see e.g.\ \cite{eym06}), we will in the interest of simplicity of exposition
henceforth assume that the classical grid regularity parameters (grid
skewness, internal cell angles, and coordination number of vertexes)
do not deteriorate.

\subsection{Discrete variables and norms}\label{sec2.2}

In contrast to MPFA methods for the scalar diffusion equation \cite{aav02}, it is
for the MPSA O-method not sufficient to use so-called continuity points to
provide sufficient constraints to yield a unique discretization \cite{nor14b}. Thus,
we need to extend the discrete function spaces utilized previously \cite{age10,eym07}
to allow for multiple unknowns per subface. We detail the three discrete
spaces used in our analysis below.

The following discrete space is classical \cite{eym06}:

\begin{definition}\label{def2.1}
For the mesh $\Tc$, let $\Hc_\Tc (\Omega)\subset L^2 (\Omega)$ be the set of
piece-wise constant functions on the cells of the mesh $\Tc$.
\end{definition}

As with the dual interpretation of the elements $K\in\Tc$, the space $\Hc_\Tc
(\Omega)$ is isomorphic to the space of discrete variables associated with the
cell-center points $\xb_K$. There should also be no cause for confusion in the
following when we work with the vector-valued spaces, still denoted $\Hc_\Tc$.

For the space $\Hc_\Tc$ we introduce the inner product
\[
[u,v]_\Tc=\sum_{K\in\Tc}\sum_{\sigma\in\Fc_K}\frac{m_\sigma}{d_{K,\sigma}} 
(\gamma_\sigma u-u_K)
(\gamma_\sigma v-v_K) 
\]
and its induced semi-norm
\[
\abs{u}_\Tc=([u,u]_\Tc )^{1/2}
\]
Here the operator $\gamma_\sigma u$ interpolates the piecewise constant values of
$\Hc_\Tc$ onto the faces of the mesh, weighted by the distances $d_{K,\sigma}$.
\[
\gamma_\sigma u=\biggl(\sum_{K\in\Tc_\sigma}\frac{u_K}{d_{K,\sigma}}\biggr)/
\biggl(\sum_{K\in\Tc_\sigma}d_{K,\sigma}^{-1}\biggr) 
\quad\text{for all }\sigma\in\Fc;\
\sigma\notin\Gamma_D
\]

For Dirichlet boundary edges, $\sigma\in\Gamma_D$, we take $\gamma_\sigma u=0$. Equivalently,
the operator $\gamma_\sigma u$ can be defined as the value which minimizes the
definition of the semi-norm $\abs{u}_\Tc$. We note that this semi-norm, and
those that follow, are equivalent to full norms when $\Gamma_D$ has positive
measure. In the case where $\Gamma_N=\partial\Omega$, rigid body motions must be excluded,
as we will note later.

The following space is the discontinuous discrete space which we use
to construct the consistent gradient functions of our scheme. It is to
our knowledge novel in the context of analysis of finite volume methods,
however it is natural in the sense that it is a discrete version of the
first-order discontinuous Galerkin space (as we will emphasize later).

\begin{definition}\label{def2.2}
For the mesh triplet $\Dc$, let $\Hc_\Dc$ be the set of real scalars
$(u_K,u_{K,s}^{\sigma,\beta})$, for all $K\in\Tc$, for all $(s,\sigma)\in\Vc_K\times\Fc_K$ and for
all $\beta\in\Gc_\sigma^s$.
\end{definition}

The space $\Hc_\Dc$ thus contains one unknown per cell, in addition to multiple
unknowns on each interior sub-face. This will be essential to control
the space $\Rfr(\Omega)$. As above, we will immediately take $u_{K,s}^{\sigma,\beta}=0$
for all $\sigma\in\Gamma_D$.

We denote for all internal subfaces
$\dblbr{u}_s^{\sigma,\beta}=u_{R,s}^{\sigma,\beta}-u_{L,s}^{\sigma,\beta}$ for $u\in\Hc_\Dc$ and
$\Tc_\sigma=\{R,L\}$ as the jump in the discrete function $u$ across that edge. We will
also need a notion of an average face value, and we denote similarly for
all internal subfaces $\aver{u}_s^\sigma=\frac1{m_s^\sigma}\sum_{\beta\in\Gc_s^\sigma}\omega_\beta
\frac{u_{R,s}^{\sigma,\beta}+u_{L,s}^{\sigma,\beta}}2$. For boundary edges $\sigma\in\partial\Omega$ only
one function value is available and we define $\dblbr{u}_s^{\sigma,\beta}=0$ and
$\aver{u}_s^\sigma=\frac1{m_s^\sigma}\sum_{\beta\in\Gc_s^\sigma}\omega_\beta u_{R,s}^{\sigma,\beta}$.
We now associate with the space $\Hc_\Dc$ the inner product (note that unless
explicitly marked with parenthesis, summation lasts the full equation):
\[
[u,v]_\Dc=\sum_{K\in\Tc}\sum_{s\in\Vc_K}\sum_{\sigma\in\Fc_s}\frac{m_K^s}{d_{K,\sigma}^2}
(u_K-\aver{u}_s^\sigma)(v_K-\aver{v}_s^\sigma)+\frac{m_K^s}{d_{K,\sigma}^2}
\frac1{m_s^\sigma}\sum_{\beta\in\Gc_s^\sigma}\omega_\beta \dblbr{u}_s^{\sigma,\beta} \dblbr{v}_s^{\sigma,\beta}
\] 
and the induced semi-norm
\[
\abs{u}_\Dc=([u,u]_\Dc)^{1/2}
\]
As with $\abs{u}_\Tc$, it is straight-forward to also define a proper norm
for $\Hc_\Dc$.

The above ``discontinuous'' discrete space generalizes the
``continuous'' discrete space, which we recall as \cite{age10}:

\begin{definition}\label{def2.3}
For the mesh triplet $\Dc$, let $\Hc_\Cc$ be the set of real scalars
$(u_K,u_s^\sigma )$, for all $K\in\Tc$ and for all $(s,\sigma)\in\Vc_K\times\Fc_K$.
\end{definition}

By introducing the natural projection operator $\Pi_D\colon\Hc_\Cc\to\Hc_\Dc$ as $(\Pi_D
u)_K=u_K$; $(\Pi_D u)_{K,s}^{\sigma,\beta}=u_s^\sigma$ for all $K\in\Tc$ and for all
$(s,\sigma)\in\Vc_K\times\Fc_K$, we can immediately define the inner product
\[
[u,v]_\Cc=[\Pi_D u,\Pi_D v]_\Dc
\]
and the induced semi-norm
\[
\abs{u}_\Cc=([u,u]_\Cc)^{1/2}
\]

In addition to the projection operator defined above, we shall need a
few more operators to move between function spaces.
\begin{itemize} 
\item Let the operator $\Pi_\Tc\colon\Hc_\Dc\to\Hc_\Tc$ be defined as $(\Pi_\Tc u)(x)=u_K$ for
all $x\in K$ and $K\in\Tc$. Furthermore, as there should be no reason for
confusion we also define $\Pi_\Tc\colon\Hc_\Cc\to\Hc_\Tc$ with as $(\Pi_\Tc u)(x)=(\Pi_\Tc
\Pi_\Dc u)(x)=u_K$ for all $x\in K$ and $K\in\Tc$. Finally, we also write
$\Pi_\Tc\colon C(\Omega)\to\Hc_\Tc$ as $(\Pi_\Tc u)(x)=u(x_K)$ for all $x\in K$ and $K\in\Tc$.

\item Let the operator $\Pi_\Cc\colon\Hc_\Dc\to\Hc_\Cc$ be defined as $(\Pi_\Cc u)_K=u_K$; $(\Pi_\Cc
u)_s^\sigma=\aver{u}_s^\sigma$ for all $K\in\Tc$ and for all $(s,\sigma)\in\Vc_K\times\Fc_K$.
\end{itemize}

The spaces defined above satisfy the following inequalities.
\begin{itemize}
\item Discrete Sobolev inequality \cite{eym06}: For all $u\in\Hc_\Tc$ and for
all $q\in[1,2d/(d-\nobreak 2+\nobreak \epsilon))$
\[
\norm{u}_{L^q}\le q\,C_{\mathit{sob}} \abs{u}_\Tc
\]

\item Relationship between $\Hc_\Tc$ and $\Hc_\Cc$ \cite{age10}: For all $u\in\Hc_\Cc$
\[
\abs{\Pi_\Tc u}_\Tc\le\sqrt{d}\, \abs{u}_\Cc
\]

\item Relationship between $\Hc_\Cc$ and $\Hc_\Dc$ (trivial from definitions):
For all $u\in\Hc_\Dc$
\[
\abs{\Pi_\Cc u}_\Cc\le\abs{u}_\Dc
\]
\end{itemize}

Finally, we introduce local spaces $\Hc_{\Dc,s}\subset\Hc_\Dc$ for each $s\in\Vc$ defined
such that $u\in\Hc_{\Dc,s}$ if $u_{K,t}^{\sigma,\beta}=0$ for all $t\in\Vc$ with $t\neq s$ and
$u_K=0$ if $s\notin\Vc_K$. Similarly, $\Hc_{\Tc,s}$ and $\Hc_{\Cc,s}$ are defined through the
projection operators defined above. The local spaces have the natural
norms, which to be precise are given for all $u\in\Hc_\Dc$ as
\[
\abs{u}_{\Dc,s}^2=\sum_{K\in\Tc_s}\sum_{\sigma\in\Fc_s}\frac{m_K^s}{d_{K,\sigma}^2}
(u_K-\aver{u}_s^\sigma)^2+\frac{m_K^s}{d_{K,\sigma}^2}\frac1{m_s^\sigma}
\sum_{\beta\in\Gc_s^\sigma}\omega_\beta (\dblbr{u}_s^{\sigma,\beta})^2 
\]
And for all $u\in\Hc_\Tc$ as
\[
\abs{u}_{\Tc,s}^2=\sum_{K\in\Tc_s}\sum_{\sigma\in\Fc_s\bigcap\Fc_K}\frac{m_\sigma^s}{d_{K,\sigma}}
(\gamma_\sigma u-u_K)^2 
\]
Such that both
\[
\abs{u}_\Dc^2=\sum_{s\in\Vc}\abs{u}_{\Dc,s}^2\quad\text{and}\quad
\abs{u}_\Tc^2=\sum_{s\in\Vc}\abs{u}_{\Tc,s}^2
\]

\section{The MPSA Finite Volume discretization}\label{sec3}

In this section, we will utilize the spaces defined in Section~\ref{sec2} to
establish a cell-centered finite volume method for elasticity. The method
presented herein is a slight generalization of the MPSA O-method as it
was defined in \cite{nor14b}. The construction is inspired by, but generalizes in
necessary aspects, the hybrid finite volume methods in \cite{age08,age10}.

\subsection{Discrete mixed variational problem}\label{sec3.1}

Since we are dealing with the vector equation~\eqref{eq1.1}, we will seek
solutions $\ub$ in the discrete vector-valued spaces $\Hbc_{\chi}=(\Hc_{\chi})^d$,
where $\chi\in\{\Cc,\Dc,\Tc\}$. These spaces inherit all the definitions of their
scalar counterparts, with the understanding that all inner products are
extended with vector inner products in their definitions. The norms are
in consequence the root of the square of the component-wise norms.

Following \cite{age08}, we will use two notions of discrete gradients. However,
since we will work with both spaces $\Hbc_\Cc$ and $\Hbc_\Dc$, where the latter is
multivalued on internal edges, the precise construction of the gradients
differs from those works. The first gradient is the proper negative
transpose of the divergence operator, and is constructed to yield a
conservative finite volume formulation.

\begin{definition}\label{def3.1}
For each $K\in\Tc$ and each $s\in\Vc_K$ we define the \emph{finite
volume gradient} for all $\ub\in\Hbc_\Cc$:
\begin{equation}\label{eq3.1}
(\widetilde{\nabla}\ub)_K^s=\frac1{m_K^s} \sum_{\sigma\in\Fc_K\cap\Fc_s}m_\sigma^s 
(\aver{\ub}_s^\sigma-\ub_K)\otimes\nb_{K,\sigma}
\end{equation} 
\end{definition}

Here, we denote the vector outer product with $\otimes$, such that each row
of the product matrix contains the approximation to the gradient of the
corresponding component of $\ub$. We construct a second gradient with the
property that it is exact for linear variation in $\ub$ with respect to the
underlying physical space.

\begin{definition}\label{def3.2}
For each $K\in\Tc$ and each $s\in\Vc_K$ we define the \emph{consistent
gradient} for all $\ub\in\Hbc_\Dc$:
\begin{equation}\label{eq3.2}
(\bnabla\ub)_K^s=\sum_{\sigma\in\Fc_K\cap\Fc_s}(\aver{\ub}_{K,s}^\sigma-\ub_K)
\otimes\gb_{K,\sigma}^s
\end{equation} 
\end{definition}

Here we extend the averaging notation in the natural way such that
$\aver{\ub}_{K,s}^\sigma=\frac1{m_s^\sigma}\sum_{\beta\in\Gc_s^\sigma}\omega_\beta u_{K,s}^{\sigma,\beta}$. In
order to satisfy the desired consistency property, we require that $(\bnabla\ub)_K^s$ 
is exact for linear displacements, therefore $\gb_{K,\sigma}^s$ are
defined by the system of equations:
\begin{equation}\label{eq3.3}
\Ib=(\bnabla\xb)_K^s=\sum_{\sigma\in\Fc_K\cap\Fc_s}
(\aver{\xb}_{K,s}^\sigma-\xb_K)\otimes\gb_{K,\sigma}^s
\end{equation} 
Here $\Ib$ is the $d$-dimensional second-order identity tensor.

For all 2D grids and for all 3D grids where no more than three faces
meet at any vertex, equation~\eqref{eq3.3} uniquely determines $\gb_{K,\sigma}^s$ 
\cite{aav02,age10}. This encompasses all grids considered herein.

In the case of both gradients, the symmetric gradient is obtained in
the same way as in the continuous setting by taking the average of the
gradient and its transpose. Furthermore, the equivalent discrete divergence
is the trace of the gradient, e.g.
\[
(\widetilde{\unabla}\ub)_K^s=[(\widetilde{\nabla}\ub)_K^s+{(\smash{\widetilde{\nabla}}\ub)_K^s}^T]/2 
\quad\text{and}\quad
(\widetilde{\nabla}\cdot\ub)_K^s=\tr(\widetilde{\nabla}\ub)_K^s
\]
while also
\[
(\ubnabla\ub)_K^s=[(\bnabla\ub)_K^s+{(\smash{\bnabla}\ub)_K^s}^T]/2 
\quad\text{and}\quad
(\bnabla\cdot\ub)_K^s=\tr(\bnabla\ub)_K^s
\]

We now define our finite volume scheme for linear elasticity, equations~\eqref{eq1.1}, 
through the specification of the following three bilinear forms. The
first bilinear form embodies a discrete form of Hooke's law together
with the finite volume structure of the method, and is analogous to the
weak form stated in equation~\eqref{eq1.2}. Thus we define for $(\ub,\vb)\in\Hbc_\Dc\times\Hbc_\Cc$
\begin{equation}\label{eq3.4}
b_\Dc (\ub,\vb)=\sum_{K\in\Tc}\sum_{s\in\Vc_K}m_K^s \bigl(2\mu_K (\ubnabla 
\ub)_K^s:(\widetilde{\unabla}\vb)_K^s+\lambda_K 
(\bnabla\cdot\ub)_K^s (\widetilde{\nabla}\cdot\vb)_K^s\bigr) 
\end{equation}
The discrete coefficients are given as subcell averages of their
continuous counterparts, e.g.\ $\mu_K=m_K^{-1} \int_K\mu \,d\xb$. The
second bilinear form controls jumps across subcell faces, and is defined
for $(\ub,\wb)\in\Hbc_\Dc\times\Hbc_\Dc$
\begin{equation}\label{eq3.5}
a_\Dc (\ub,\wb)=\sum_{s\in\Vc}\sum_{\sigma\in\Fc_s}\frac{\alpha_s^\sigma}{m_s^\sigma}
\sum_{\beta\in\Gc_\sigma^s}\omega_\beta \dblbr{\ub}_s^{\sigma,\beta}\cdot\dblbr{\wb}_s^{\sigma,\beta} 
\end{equation}
The family of weights $\alpha_s^\sigma$ are assumed to be uniformly bounded,
$0<\alpha^-\le\alpha_s^\sigma\le\alpha^+<\infty$. We retain the freedom to specify $\alpha_s^\sigma$ to
improve the stability of the scheme. Numerically experiments indicate
that the weights $\alpha_s^\sigma$ should be related to the harmonic mean of the
material coefficients $\mu_K$ and $\lambda_K$ \cite{nor14b}.

Finally, we introduce a bilinear form to constrain the discrete unknowns on
subfaces $\ub_{K,s}^{\sigma,\beta}$ to the hyperplane given by $\ub_K$ and $(\bnabla\ub)_K^s$;
thus for all $(\ub,\wb)\in\Hbc_\Dc\times\Hbc_\Dc$ we define
\begin{multline}
c_\Dc(\ub,\wb)=\sum_{K\in\Tc}\sum_{s\in\Vc_K}\sum_{\sigma\in\Fc_s}\sum_{\beta\in\Gc_\sigma^s}
\bigl(\ub_{K,s}^{\sigma,\beta}-\ub_K-(\bnabla\ub)_K^s\cdot(\xb_\beta-\xb_K)\bigr)\\
\cdot
\bigl(\wb_{K,s}^{\sigma,\beta}-\wb_K-(\bnabla\wb)_K^s\cdot(\xb_\beta-\xb_K)\bigr) 
\label{eq3.6}
\end{multline}

The above bilinear forms allow us to define the discrete mixed variational
problem:
Find $(\ub_\Dc,\yb_\Cc,\yb_\Dc )\in\Hbc_\Dc\times\Hbc_\Cc\times\Hbc_\Dc$ such that
\begin{alignat}{2}
&b_\Dc (\ub_\Dc,\vb)=\int_\Omega \fb\cdot\Pc_{\Cc,\Tc} \vb\, d\xb&&\quad\text{for all }\vb\in\Hbc_\Cc\label{eq3.7}\\
&c_\Dc (\ub_\Dc,\wb)=0 &&\quad\text{for all }\wb\in\Hbc_\Dc
\label{eq3.8}
\end{alignat}
and
\begin{equation}
a_\Dc (\ub_\Dc,\wb)+b_\Dc (\wb,\yb_\Cc )+c_\Dc (\wb,\yb_\Dc )=0\quad \text{for all }\wb\in\Hbc_\Dc
\label{eq3.9}
\end{equation}

The solution $\ub_\Dc\in\Hbc_\Dc$ contains the solution satisfying the equations
of elasticity on finite volume form \eqref{eq3.7}, constrained to be piece-wise
linear on sub-cells \eqref{eq3.8}, while the remaining degrees of freedom are
selected to minimize jumps in the solution \eqref{eq3.9}. The component $\yb_\Cc$ and
$\yb_\Dc$ are Lagrange multipliers for the constrained minimization problem,
and will not be of further interest.

We note that equation~\eqref{eq3.7} can be seen as a direct finite volume
formulation of equations~\eqref{eq1.1}, wherein these equations hold in an integral
sense for each cell $K\in\Hc_\Tc$. Conversely, equation~\eqref{eq3.7} can be identified
as a Petrov--Galerkin discretization of equations~\eqref{eq1.2}, wherin the test
functions are chosen as piece-wise constants on the cells $K$. In this
interpretation, the shape functions are defined implicitly by equations
\eqref{eq3.8} and \eqref{eq3.9}. We return to the consistency of equations~\eqref{eq3.7}--\eqref{eq3.9}
in Section~\ref{sec5}.

\subsection{Finite volume formulation}\label{sec3.2}
In this section we identify that the discrete mixed variational problem
\eqref{eq3.7}--\eqref{eq3.9} is equivalent to a finite volume scheme. The forces acting on a
subface $\Tb_{K,s}^\sigma$ are naturally defined from the bilinear form $b_\Dc$ and
Hooke's law (equation~\eqref{eq1.1}b), thus we define for all for all $\ub\in\Hbc_\Dc$
the tractions
\begin{equation}
\Tb_{K,s}^\sigma (\ub)=m_\sigma^s [2\mu_K (\ubnabla\ub)_K^s
+\lambda_K (\bnabla\cdot\ub)_K^s\Ib]\cdot\nb_{K,\sigma} \label{eq3.10}
\end{equation}
We verify by comparison to equation~\eqref{eq3.4} that for all $(\ub,\vb)\in\Hbc_\Dc\times\Hbc_\Cc$
\begin{equation}
b_\Dc(\ub,\vb)=\sum_{K\in\Tc}\sum_{s\in\Vc_K}\sum_{\sigma\in\Fc_K\cap\Fc_s}
\Tb_{K,s}^\sigma(\ub)\cdot(\vb_K-\vb_s^\sigma)\label{eq3.11}
\end{equation}
By considering $\vb$ from the canonical basis of $\Hbc_\Cc$, we can now identify
that the discrete variational mixed formulation \eqref{eq3.7}--\eqref{eq3.9} as equivalent
to the hybrid finite volume method: Find $\ub_\Dc\in\Hbc_\Dc$ such that
\begin{alignat}{2}
&\sum_{\sigma\in\Fc_K}\Tb_K^\sigma (\ub_\Dc)=\int_K\fb\,d\xb&&\quad
\text{for all }K\in\Tc; \label{eq3.12}\\
&\Tb_K^\sigma (\ub_\Dc )=\sum_{s\in\Vc_\sigma}\Tb_{K,s}^\sigma (\ub_\Dc)&&\quad
\text{for all }(K,\sigma)\in\Tc\times\Fc_K; \label{eq3.13}\\
&\Tb_{R,s}^\sigma (\ub_\Dc)=-\Tb_{L,s}^\sigma (\ub_\Dc)&&\quad 
\text{for all }(\sigma,s)\in\Fc_{\mathit{int}}\times\Vc_\sigma
\notag\\[-.7ex]
&&&\quad\text{with }\{R,L\}=\Tc_\sigma; \label{eq3.14}\\
&\ub_{K,s}^{\sigma,\beta}-\ub_K-(\bnabla\ub)_K^s\cdot(\xb_\beta-\xb_K)=\Ob
&&\quad
\text{for all }
(K,s,\sigma)\in\Tc\times\Vc_K\times(\Fc_s\cap\Fc_K)
\notag\\[-.7ex]
&&&\quad\text{and }\beta\in\Gc_\sigma^s;
\label{eq3.15}\\
&\ub_\Dc=\argmin_{\ub\in\Ub_1}a_\Dc (\ub,\ub)&&\quad 
\text{where $\Ub_1\subset\Hbc_\Dc$ is the
set of}
\notag\\[-.7ex]
&&&\quad\text{functions satisfying \eqref{eq3.12}--\eqref{eq3.15};} 
\label{eq3.16}
\end{alignat}

To be precise, equation~\eqref{eq3.12} follows from equation~\eqref{eq3.7} by choosing test
functions $\vb\in\Hbc_\Tc$. Equation~\eqref{eq3.13} follows from the definition of the space
$\Hbc_\Cc$, which contains a single degree of freedom on each edge, and that the
right-hand side of equation~\eqref{eq3.7} is zero for test functions associated
with faces of the grid. Similarly, equation~\eqref{eq3.14} follows from the
continuity of the face variables in $\Hbc_\Cc$ and the same property of 
equation~\eqref{eq3.7}. Finally, equations~\eqref{eq3.15} and~\eqref{eq3.16} are direct counterparts of
the constraint equation~\eqref{eq3.8} and the interpretation of equation~\eqref{eq3.9}
as the Euler-Lagrange equation for a constrained minimization problem.

We will see in Section~\ref{sec4} that due to the particular structure of the
discrete differential operators, and the local definition of the forces
in equation~\eqref{eq3.10}, the minimization problem in equation~\eqref{eq3.16} has a
unique solution in terms of the variables $\ub_\Tc\in\Hbc_\Tc\subset\Hbc_\Dc$, the set of
variables associated with the cell centers.

Assume for the moment (we will return to this point in the next section)
that for each $s\in\Vc$ we can define the local projection operator
$\Pi_{FV,s}\colon\Hbc_{\Tc,s}\to\Hbc_{\Dc,s}$, as the solution of
\begin{equation}
\Pi_{FV,s} \ub_{\Tc,s}=\argmin_{\ub\in\Ub_s}a_\Dc (\ub,\ub)
\label{eq3.17}
\end{equation}
Where $\Ub_s\subset\Hbc_{\Dc,s}$ are the spaces that satisfy the constraints of
\eqref{eq3.13}--\eqref{eq3.15} with $\ub_{\Tc,s}$ given. It follows that we can construct explicit,
local expressions for the forces $\Tb_{K,s}^\sigma$ using the expression given
in equation~\eqref{eq3.10}:
\begin{equation}
\Tb_{K,s}^\sigma (\ub_{\Tc,s})=\Tb_{K,s}^\sigma (\Pi_{FV,s} \ub_{\Tc,s})=
\sum_{K'\in\Tc_s}t_{K,K',s,\sigma} \ub_{K'} \label{eq3.18}
\end{equation}
The local coefficient tensors $t_{K,K',s,\sigma}$ are referred to as subface
stress weight tensors, and generalize the notion of transmissibilities
from the scalar diffusion equation \cite{nor14b}. We infer from equation~\eqref{eq3.14}
that $t_{K,K',s,\sigma}=-t_{K',K,s,\sigma}$. Furthermore, we have from equation~\eqref{eq3.13}, 
that also the face stress weight tensors can be calculated, with
\begin{equation}
\Tb_K^\sigma (\ub_\Tc)=\sum_{s\in\Vc_\sigma}\sum_{K'\in\Tc_s}t_{K,K',s,\sigma} \ub_{K'} 
\label{eq3.19}
\end{equation}
Combining equations~\eqref{eq3.12} and~\eqref{eq3.19}, we arrive at the cell-centered
finite volume scheme expressed in terms of cell-centered variables
only. This scheme is identical to the scheme presented as \emph{MPSA O-method
(general)} in \cite{nor14b}.

\section{Local problems, Korn's inequality, and coercivity}\label{sec4}
Our goal is to show that the discrete mixed variational problem
\eqref{eq3.7}--\eqref{eq3.9} is well-posed. This requires four steps. First, we formalize the
discussion in section~\ref{sec3.2} using a variational multiscale framework to state
variational problem only in terms of variables in the space $\Hbc_\Tc$, exploiting
local operators which are defined through local problems. Secondly,
we show the stability of these local problems. Thereafter, we arrive
at a discrete Korn's inequality through a projection onto piece-wise
linear space on each subcell. Finally, we establish that coercivity,
and thus wellposedness, of the full problem can be verified based on
local coercivity criteria.

\subsection{A non-mixed discrete variational formulation}\label{sec4.1}
We use an approach similar to the variational multiscale methods \cite{hug98}
as applied to mixed problems \cite{arb04,nor09}, in that we split the mixed problem
\eqref{eq3.7}--\eqref{eq3.9} into two coupled problems. We introduce $\Hbc_\Dc^\Fc\subset\Hbc_\Dc$ denoting the
variables associated with cell faces, such that $\Hbc_\Dc=\Hbc_\Tc\times\Hbc_\Dc^\Fc$. Identifying
now $\Hbc_\Tc$ as the space of coarse variables, and $\Hbc_\Dc^\Fc$ as the space of
fine variables, we thus consider the problem: Find 
$(\ub_\Tc,\ub_F,\yb_\Cc,\yb_\Dc)\in\Hbc_\Tc\times\Hbc_\Dc^\Fc\times\Hbc_\Cc\times\Hbc_\Dc$ 
such that (\emph{coupled coarse problem}):
\begin{equation}
b_\Dc (\{\ub_\Tc,\ub_\Fc\},\vb)=\int_\Omega\fb\cdot\Pi_\Tc \{\vb,\Ob_\Fc\}\,d\xb 
\quad\text{for all }
\vb\in\Hbc_\Tc 
\label{eq4.1}
\end{equation}
And (\emph{coupled mixed fine problem})
\begin{alignat}{2}
&b_\Dc (\{\Ob_\Tc,\ub_\Fc\},\vb)=-b_\Dc (\{\ub_\Tc,\Ob_\Fc\},\vb)&&\quad 
\text{for all }\vb\in\Hbc_\Cc^\Fc\label{eq4.2}\\
&c_\Dc (\{\Ob_\Tc,\ub_\Fc\},\wb)=-c_\Dc (\{\ub_\Tc,\Ob_\Fc\},\wb)&&\quad 
\text{for all }\wb\in\Hbc_\Dc
\label{eq4.3}\\
&
\begin{aligned}[b]
a_\Dc(\{\Ob_\Tc,\ub_\Fc\},\wb) &+ b_\Dc (\wb,\yb_\Cc)\\ &+ c_\Dc(\wb,\yb_\Dc)=
-a_\Dc(\{\ub_\Tc,\Ob_\Fc\},\wb)
\end{aligned}
&&\quad 
\text{for all }\wb\in\Hbc_\Dc \label{eq4.4}
\end{alignat}
Note that there is no integral term on the right hand sides of equations
\eqref{eq4.2}--\eqref{eq4.4} since $\Pi_\Tc \{\Ob_\Pc,\vb\}=\Ob$. Furthermore only the fine-scale problem
is on mixed form.

As observed in section~\ref{sec3}, the aim is to resolve the mixed fine problem
locally, which in the present context is realized by interchanging sums in
the definition of the operators \eqref{eq3.4}--\eqref{eq3.6} to observe that for $\chi\in\{a,c\}$
and for $(\ub,\vb,\wb)\in\Hbc_\Dc\times\Hbc_\Cc\times\Hbc_\Dc$
\begin{equation}
\chi_\Dc (\ub,\wb)=\sum_{s\in\Vc}\chi_{\Dc,s} (\ub,\wb) 
\quad\text{and}\quad 
b_\Dc(\ub,\vb)=\sum_{s\in\Vc}b_{\Dc,s} (\ub,\vb) \label{eq4.5}
\end{equation}
where the local bilinear forms are defined as
\begin{align}
b_{\Dc,s} (\ub,\vb)&=\sum_{K\in\Tc_s}m_K^s \bigl(2\mu_K^s (\ubnabla\ub)_K^s:
(\widetilde{\unabla}\vb)_K^s+\lambda_K^s (\bnabla\cdot\ub)_K^s (\widetilde{\nabla}\cdot\vb)_K^s\bigr)
\label{eq4.6}\\
\begin{split}
c_{\Dc,s}(\ub,\wb)
&=\sum_{K\in\Tc_s}\sum_{\sigma\in\Fc_s}\sum_{\beta\in\Gc_\sigma^s}
\bigl(\ub_{K,s}^{\sigma,\beta}-\ub_K-(\bnabla\ub)_K^s\cdot(\xb_\beta-\xb_K)\bigr)
\\
&\hskip3cm
\cdot
\bigl(\wb_{K,s}^{\sigma,\beta}-\wb_K-(\bnabla\wb)_K^s\cdot(\xb_\beta-\xb_K)\bigr) 
\end{split}
\label{eq4.7}
\end{align}
and
\begin{equation}
a_{\Dc,s} (\ub,\wb)=\sum_{\sigma\in\Fc_s}\frac{\alpha_s^\sigma}{m_s^\sigma}
\sum_{\beta\in\Gc_\sigma^s}\omega_\beta \dblbr{\ub}_s^{\sigma,\beta}\cdot\dblbr{\wb}_s^{\sigma,\beta} 
\label{eq4.8}
\end{equation}
This defines the local solution operators $\Pi_{FV,s}$, which were introduced
in section~\ref{sec3.2}, by (\emph{local mixed problem}): For all $\ub_\Tc\in\Hbc_{\Tc,s}$, find
$(\Pi_{FV,s} \ub_\Tc,\yb_\Cc,\yb_\Dc)\in\Hbc_{\Dc,s}^\Fc\times\Hbc_{\Cc,s}\times\Hbc_{\Dc,s}$ which satisfies
\begin{alignat}{2}
&b_{\Dc,s}(\{\Ob_\Tc,\Pi_{FV,s} \ub_\Tc\},\vb)=b_{\Dc,s} (\{\ub_\Tc,\Ob_\Fc\},\vb)&&\quad 
\text{for all }\vb\in\Hbc_{\Cc,s}^\Fc \label{eq4.9}\\
&c_{\Dc,s}(\{\Ob_\Tc,\ub_\Fc\},\wb)=-c_{\Dc,s}(\{\ub_\Tc,\Ob_\Fc\},\wb)&&\quad 
\text{for all }\wb\in\Hbc_{\Dc,s} \label{eq4.10}\\
&\begin{aligned}[b]
&a_{\Dc,s} (\{\Ob_\Tc,\Pi_{FV,s} \ub_\Tc\},\wb) \\
&\qquad + b_{\Dc,s} (\wb,\yb_\Cc) + c_{\Dc,s} (\wb,\yb_\Dc)=a_{\Dc,s}(\{\ub_\Tc,\Ob_\Fc\},\wb)
\end{aligned}
&&\quad 
\text{for all }\wb\in\Hbc_{\Dc,s} \label{eq4.11}
\end{alignat}
Existence and uniqueness of solutions to \eqref{eq4.9}--\eqref{eq4.11} is treated in 
Lemma~\ref{lem4.1} below. We note that the local projections are linear, and construct
the global finite volume projection $\Pi_{FV}\colon\Hbc_\Tc\to\Hbc_\Dc^\Fc$ by
\begin{equation}
\Pi_{FV} \ub_\Tc=\sum_{s\in\Vc}\Pi_{FV,s} \ub_\Tc 
\label{eq4.12}
\end{equation}
Using the finite-volume projection, we establish the \emph{decoupled coarse
problem}: Given $\Pi_{FV}$, find $\ub_\Tc\in\Hbc_\Tc$ such that:
\begin{equation}
b_\Dc \bigl(\Pi_{FV} \ub_\Tc,\Pi_\Cc(\Pi_{FV}\vb_\Tc)\bigr)=\int_\Omega\fb\cdot\Pi_\Tc\{\vb,\Ob_\Fc\}\,d\xb\quad
\text{for all }\vb\in\Hbc_\Tc \label{eq4.13}
\end{equation}
Equations \eqref{eq4.9}--\eqref{eq4.13} are algebraically equivalent to the original
formulation given in equations \eqref{eq3.7}--\eqref{eq3.8}. However, the present formulation
has the advantage that wellposedness can be established in steps. In
particular, the mixed problems \eqref{eq4.9}--\eqref{eq4.11} are all local, so that we avoid
considering a global inf-sup type condition. Furthermore, the global
problem is similar in structure to a standard Galerkin approximation to
the original system \eqref{eq1.1}, and for this problem we will establish (local)
conditions to ensure coercivity.

\subsection{Well-posedness of local mixed problems}\label{sec4.2}
The local mixed problems are given by equations \eqref{eq4.9}--\eqref{eq4.11}, and define
the finite volume projection.

\begin{lemma}\label{lem4.1}
The solutions of the local mixed problems \eqref{eq4.9}--\eqref{eq4.11} exist and
are unique.
\end{lemma}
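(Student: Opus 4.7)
Since every space in the system lives on the star of a single vertex $s$, the local mixed problem (4.9)--(4.11) is a square finite-dimensional linear system. Existence is therefore equivalent to uniqueness, and the plan is to show that the homogeneous problem (obtained by setting $\ub_\Tc=\Ob$) admits only the trivial solution $\ub_\Fc=\Ob$, $\yb_\Cc=\Ob$, $\yb_\Dc=\Ob$.

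The first stage establishes that $\ub_\Fc=\Ob$. I would test (4.11) with $\wb=\{\Ob_\Tc,\ub_\Fc\}$ to obtain
\begin{equation*}
a_{\Dc,s}(\{\Ob_\Tc,\ub_\Fc\},\{\Ob_\Tc,\ub_\Fc\})
+b_{\Dc,s}(\{\Ob_\Tc,\ub_\Fc\},\yb_\Cc)
+c_{\Dc,s}(\{\Ob_\Tc,\ub_\Fc\},\yb_\Dc)=0.
\end{equation*}
Decomposing $\yb_\Cc$ into its cell-centered part (which is annihilated because $\ub_\Tc=\Ob$ in the test slot) and its face-centered part in $\Hbc_{\Cc,s}^\Fc$, the homogeneous version of (4.9) kills the $b_{\Dc,s}$ term, and the homogeneous version of (4.10) kills the $c_{\Dc,s}$ term. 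Hence $a_{\Dc,s}(\{\Ob_\Tc,\ub_\Fc\},\{\Ob_\Tc,\ub_\Fc\})=0$, and by (4.8) every subface jump $\dblbr{\ub_\Fc}_s^{\sigma,\beta}$ vanishes.

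Combining vanishing jumps with the linearity constraint (4.10), each trace $\ub_{K,s}^{\sigma,\beta}$ equals $\ub_K+(\bnabla\ub)_K^s\cdot(\xb_\beta-\xb_K)$, which under the homogeneous assumption $\ub_K=\Ob$ forces the consistent gradient to agree on neighbouring subcells at $s$. Here the unique solvability of the defining system (3.3) for $\gb_{K,\sigma}^s$, valid under the paper's restriction that at most three faces meet at a vertex in 3D, excludes nontrivial modes and delivers $\ub_\Fc=\Ob$.

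With $\ub_\Fc=\Ob$, equation (4.11) collapses to
\begin{equation*}
b_{\Dc,s}(\wb,\yb_\Cc)+c_{\Dc,s}(\wb,\yb_\Dc)=0 \quad\text{for all }\wb\in\Hbc_{\Dc,s},
\end{equation*}
and the final step is to conclude $\yb_\Cc=\Ob$ and $\yb_\Dc=\Ob$ from this coupled local inf-sup. I expect this to be the main obstacle: the two multipliers act on overlapping degrees of freedom, so one must exhibit test functions $\wb$ that separate the $b_{\Dc,s}$ contributions (tied to $\widetilde{\nabla}$ and hence to the cell and face values of $\yb_\Cc$) from the $c_{\Dc,s}$ contributions (tied to the quadrature-point values of $\yb_\Dc$ through the hyperplane defect). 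Because everything is localized at $s$ and finite-dimensional, this reduces to a dimension count combined with the same unique solvability of (3.3), and the conclusion follows.
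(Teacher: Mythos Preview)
Your approach differs from the paper's, which treats \eqref{eq4.9}--\eqref{eq4.11} as a constrained minimization (existence of a minimizer is immediate since $a_{\Dc,s}$ is symmetric positive semi-definite) and then argues uniqueness of $\ub_\Fc$ only, never touching the Lagrange multipliers. More importantly, there are two concrete gaps in your argument.

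\textbf{Step 1 is not justified.} You claim that the cell-centered part of $\yb_\Cc$ is ``annihilated because $\ub_\Tc=\Ob$ in the test slot''. But $b_{\Dc,s}$ is not symmetric: the first argument enters through the consistent gradient $\bnabla$ and the second through the finite-volume gradient $\widetilde{\nabla}$. Using the traction form \eqref{eq3.11}, $b_{\Dc,s}(\{\Ob_\Tc,\ub_\Fc\},\{\yb_\Cc^\Tc,\Ob\})=\sum_{K\in\Tc_s}\yb_{\Cc,K}^\Tc\cdot\sum_{\sigma\in\Fc_K\cap\Fc_s}\Tb_{K,s}^\sigma(\{\Ob_\Tc,\ub_\Fc\})$, and the homogeneous \eqref{eq4.9} only gives $\sum_{K\in\Tc_\sigma}\Tb_{K,s}^\sigma=0$ per subface, not $\sum_\sigma\Tb_{K,s}^\sigma=0$ per cell. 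The paper avoids this entirely: since $\Ob$ satisfies the constraints \eqref{eq4.9}--\eqref{eq4.10} with energy zero, any minimizer must also have $a_{\Dc,s}=0$.

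\textbf{Step 3 misses the key mechanism.} Once jumps vanish and \eqref{eq4.10} holds, you are in the space of continuous piecewise linear functions on the subcells of $\Tc_s$ vanishing at each $\xb_K$; this null-space has up to $d^2$ degrees of freedom, and the unique solvability of \eqref{eq3.3} says nothing about it --- \eqref{eq3.3} merely defines $\gb_{K,\sigma}^s$. What actually kills the null-space is the homogeneous constraint \eqref{eq4.9}, and the paper's proof uses the geometric fact that cells are star-shaped with respect to $\xb_K$, giving $(\aver{\xb}_{K,s}^\sigma-\xb_K)\cdot\nb_{K,\sigma}>0$, to show that the $d^2$ constraints coming from \eqref{eq4.9} are independent of this null-space. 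You need this step; invoking \eqref{eq3.3} is a red herring.

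Finally, your square-system strategy requires uniqueness of the multipliers $(\yb_\Cc,\yb_\Dc)$ as well, which you acknowledge is the ``main obstacle'' and leave as a dimension count. The paper simply does not prove (or need) this: it only establishes that $\Pi_{FV,s}\ub_\Tc$ is well-defined, which suffices for everything downstream.
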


\begin{proof}
Since $a_{\Dc,s}$ is a symmetric, positive semi-definite bilinear
form, the system is equivalent to a constrained minimization problem,
to which existence of solutions is guaranteed. In particular, for $\ub_\Tc=\Ob$,
it is clear that $\Pi_{FV,s} \ub_\Tc=\Ob$ satisfies the constraints \eqref{eq4.9}--\eqref{eq4.10}
and has the minimum energy $a_{\Dc,s} (\Pi_{FV,s} \Ob_\Tc,\Pi_{FV,s}\Ob_\Tc)=0$. Thus
for $\ub_\Tc=\Ob$, the space of minimizers is isomorphic to the space of continuous
piecewise linear functions on $\Tc_s$. However, all cells $K\in\Tc$ are star-shaped
with respect to $\xb_K$, thus $(\aver{\xb}_{K,s}^\sigma-\xb_K)\cdot\nb_{\sigma,s}>0$, from
which it follows that the constraints \eqref{eq4.9} are linearly independent
to the null-space of $a_{\Dc,s} (\{\Ob_\Tc,\ub\},\{\Ob_\Tc,\ub\})$. Uniqueness follows
since it is straight-forward to verify that the null-space of $a_{\Dc,s}
(\{\Ob_\Tc,\ub\},\{\Ob_\Tc,\ub\})$ has at most $d^2$ degrees of freedom, while at least $d^2$
constraints \eqref{eq4.9} are independent. 
\end{proof}

Since the local mixed problem \eqref{eq4.9}--\eqref{eq4.11} is finite-dimensional, the norm
of the projection operator is finite and we define:

\begin{definition}\label{def4.2}
For every $s\in\Vc$, the local mixed problem \eqref{eq4.9}--\eqref{eq4.11} has
a unique solution by lemma~\ref{lem4.1}, and we define the norm of the of the
solution operator $\Pi_{FV,s}$ as $\theta_{1,s}$, such that
\begin{equation}
\abs{\Pi_{FV,s} \ub}_{\Dc,s}\le\theta_{1,s} \abs{\ub}_{\Tc,s}\quad 
\text{for all }\ub\in\Hbc_\Tc
\label{eq4.14}
\end{equation}
\end{definition}

The coefficient $\theta_{1,s}$ will in general be dependent on the geometry of
the mesh $\Dc$ and parameter functions $\mu$ and $\lambda$, but independent of the mesh
size $h$ due to the scaling invariance of the norm. We define the maximum
over the local norms as $\theta_1=\max_{s\in\Vc}\theta_{1,s}$.

\subsection{Korn's inequality}\label{sec4.3}

We will need a discrete Korn's inequality to show coercivity of the
method. Since the constraints~\eqref{eq4.10} guarantee that the projections
$\Pi_{FV,s}$ are consistent with a piecewise linear approximation on the
subcells, we are inspired to consider Korn's inequality in this setting:

\begin{definition}\label{def4.3}
Let the projection operator $\Pi_{L^2}\colon\Hbc_\Dc\to(L^2(\Omega))^d$
be defined such that for all $(K,s)\in\Tc\times\Vc$ and $\xb\in K$, we have 
$\Pi_{L^2}\ub(\xb)=\ub_K+(\bnabla\ub)_K^s\cdot(\xb-\nobreak\xb_K)$.
\end{definition}

We note that this projection is particularly appropriate for functions
in $\ub\in\Pi_{FV,s} \Hbc_\Tc$, since equations~\eqref{eq4.7} and \eqref{eq4.10} assure that
in this space the face variables satisfy $\ub_{K,s}^{\sigma,\beta}=\Pi_{L^2}
\ub(\xb_{K,s}^{\sigma,\beta})$ for all cells $K\in\Tc$, faces $\sigma\in\Fc_K$, corners
$s\in\Vc_K\cap\Vc_\sigma$ and quadrature points $\beta\in\Gc_\sigma^s$.

For the space of piecewise linear functions we may thus recall the
appropriate Korn's inequality \cite{bre03}

\begin{lemma}\label{lem4.4}
For functions $\ub\in\Pi_{FV,s} \Hbc_\Tc$ it holds that
\begin{multline*}
\sum_{K\in\Tc}\sum_{s\in\Vc_K}m_K^s((\bnabla\ub)_K^s)^2\\ 
\le c_K
\biggl[\norm{\Pi_{L^2 }\ub}_{L^2}^2+\sum_{K\in\Tc}\sum_{s\in\Vc_K}m_K^s 
((\ubnabla\ub)_K^s)^2+\sum_{\sigma\in\Fc_s\bigcap\Fc_K}\frac{m_K^s}{d_{K,\sigma}^2}
\frac1{m_s^\sigma}\sum_{\beta\in\Gc_s^\sigma}\omega_\beta(\dblbr{\ub}_s^{\sigma,\beta})^2\biggr]
\end{multline*}
\end{lemma}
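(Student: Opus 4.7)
The plan is to reduce the estimate to a broken Korn inequality for piecewise $H^1$ vector fields of the type proved in \cite{bre03}, applied to $\vb\equiv\Pi_{L^2}\ub\in(L^2(\Omega))^d$ on the partition of $\Omega$ into subcells $\{(K,s)\}$. On each subcell $(K,s)$ the function $\vb$ is linear with constant gradient $(\bnabla\ub)_K^s$ and constant symmetric gradient $(\ubnabla\ub)_K^s$. Therefore the left-hand side of the lemma equals $\norm{\nabla\vb}_{L^2}^2$, where the gradient is taken subcell-wise, the symmetric-gradient term on the right-hand side equals $\norm{\unabla\vb}_{L^2}^2$, and the $L^2$-term is simply $\norm{\vb}_{L^2}^2$. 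The task thus reduces to identifying the weighted jump penalty appearing in Brenner's inequality with the third term of the claim.

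The next step is to handle jumps of $\vb$ across subfaces $(s,\sigma)$ of the original mesh. Here the key observation is that for $\ub\in\Pi_{FV}\Hbc_\Tc$ the consistency constraint \eqref{eq4.10} gives $\ub_{K,s}^{\sigma,\beta}=\Pi_{L^2}\ub(\xb_\beta)$, so $\dblbr{\vb}(\xb_\beta)=\dblbr{\ub}_s^{\sigma,\beta}$ at every Gauss node. Since $\vb$ is piecewise linear, $\abs{\dblbr{\vb}}^2$ is a polynomial of degree at most two on the planar subface and is therefore integrated exactly by the quadrature $\Gc_\sigma^s$. Combined with the grid-regularity comparison $m_\sigma^s/d_{K,\sigma}\sim m_K^s/d_{K,\sigma}^2$, these subface contributions supply precisely the third term on the right-hand side of the lemma, with the geometric constants absorbed into $c_K$.

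The main obstacle is the remaining piece of Brenner's jump penalty: jumps of $\vb$ across the internal interfaces $\Gamma_{s,s'}^K=\partial(K,s)\cap\partial(K,s')$ \emph{inside} a single cell $K$, which are absent from the statement. On $\Gamma_{s,s'}^K$ both traces of $\vb$ share the value $\ub_K$ at $\xb_K$, so $\dblbr{\vb}(\xb)=\bigl((\bnabla\ub)_K^s-(\bnabla\ub)_K^{s'}\bigr)\cdot(\xb-\xb_K)$. Using $\abs{\xb-\xb_K}\le d_K$ and the regularity estimate $m_{\Gamma_{s,s'}^K}\,d_K\lesssim m_K^s+m_K^{s'}$, the weighted $L^2$ jump on this interface is bounded by $C_0\bigl(m_K^s\abs{(\bnabla\ub)_K^s}^2+m_K^{s'}\abs{(\bnabla\ub)_K^{s'}}^2\bigr)$, i.e., by a mesh-regularity multiple of the LHS. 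Summed over all internal interfaces, this contribution is absorbed back into the left-hand side, yielding the inequality provided the product of $C_0$ with Brenner's constant is smaller than one. Securing this absorption is the delicate point: the geometric factor $C_0$ is $h$-independent under the regularity assumption of Section~\ref{sec2.1}, and invoking the sharpened variant of Brenner's estimate that penalizes only the tangential (or $L^2$-projected) jump components keeps Brenner's constant correspondingly small, because on $\Gamma_{s,s'}^K$ the antisymmetric part of $(\bnabla\ub)_K^s-(\bnabla\ub)_K^{s'}$ contributes an affine rigid motion whose penalty can be reorganized against the symmetric-gradient term. The result is the stated inequality with a geometry-dependent but $h$-independent constant $c_K$.
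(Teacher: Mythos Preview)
Your reduction to Brenner's broken Korn inequality for $\vb=\Pi_{L^2}\ub$ on the subcell partition is exactly the paper's route: the left-hand side is the broken $H^1$ seminorm of $\vb$, the symmetric-gradient term is the broken $\unabla$-seminorm, and on the external subfaces $(s,\sigma)$ the Gauss quadrature recovers the $L^2$ jump exactly because $\abs{\dblbr{\vb}}^2$ is quadratic there. Up to this point your argument and the paper's are identical.

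The divergence is in how the internal interfaces $\Gamma_{s,s'}^K$ are handled, and here your argument has a genuine gap. You bound the internal jump penalty by $C_0$ times the left-hand side and then attempt to move it across; this only succeeds if the product of $C_0$ with Brenner's constant is strictly less than one. Brenner's constant in \cite{bre03} comes from a compactness argument and is not explicit, so you have no control over this product. Your attempt to rescue the absorption---invoking a ``sharpened variant'' that penalizes only projected jump components and reorganizing the antisymmetric part of $(\bnabla\ub)_K^s-(\bnabla\ub)_K^{s'}$ against the symmetric-gradient term---is suggestive but not a proof: the antisymmetric part is precisely what Korn's inequality is needed to control, so you cannot dispose of it before applying Korn.

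The paper avoids this circularity by arguing in the opposite direction: rather than bounding the internal jumps by the \emph{conclusion} of Brenner's inequality and absorbing, it asserts that the internal-jump contribution is already controlled by the \emph{data}, namely the external jump term on the right-hand side of the lemma, using mesh regularity together with the fact that $\Pi_{L^2}\ub$ is continuous at every $\xb_K$. With the internal jumps placed on the right-hand side of Brenner's estimate, no smallness condition arises and the constant $c_K$ simply inherits the geometric factors.
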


\begin{proof}
The left-hand side is identical to the broken $H_1$ semi norm of
$\Pi_{L^2}\ub$, while the jump term bounds also the surface $L^2$ norm of jumps
internal to cells due to mesh regularity and continuity at the points $\xb_K$
for all cells $K\in\Tc$. Lemma~\ref{lem4.4} therefore follows from equation (1.18) in
\cite{bre03}.
\end{proof}

\subsection{Local coercivity conditions}\label{sec4.4}

We will derive local coercivity conditions which guarantee the coercivity
of the bilinear form on the reduced subspace as given by equation~\eqref{eq4.13}. 
The reduced subspace is critical, as $b_{\Dc,s}$ is \emph{not} coercive on
the full space $\Hbc_\Dc$ (even without the introduction of a full norm). As a
consequence, coercivity will depend on the local finite volume projections
$\Pi_{FV,s}$, and we state the following assumption on the solution of the
local mixed problems.

\begin{assumption}\label{ass4.5}
For every vertex $s\in\Vc$, there exists a constant
$\theta_{2,s}\ge\theta_2>0$ such that the bilinear form $b_{\Dc,s}$ and the projection
$\Pi_{FV,s}$ satisfy for all $\ub\in\Pi_{FV,s}\Hbc_\Tc/\Rfr(\Omega)$
{\emergencystretch10pt
\begin{equation}
b_{\Dc,s} (\ub,\Pi_\Cc\ub)\ge\theta_{2,s}
\biggl(\abs{\ub}_{b_{\Dc,s}}^2+\sum_{K\in\Tc_s}\sum_{\sigma\in\Fc_s\bigcap\Fc_K}\frac{m_K^s}{d_{K,\sigma}^2}\frac1{m_s^\sigma}
\sum_{\beta\in\Gc_s^\sigma}\omega_\beta (\dblbr{\ub}_s^{\sigma,\beta})^2\biggr)
\label{eq4.15}
\end{equation}}
Where the local energy semi-norm is associated with the symmetrized
bilinear form
\[
\abs{\ub}_{b_{\Dc,s}}^2=\sum_{K\in\Tc_s}m_K^s \bigl(2\mu_K((\ubnabla\ub)_K^s)^2
+\lambda_K((\bnabla\cdot\ub)_K^s)^2\bigr) 
\]
\end{assumption}

This assumption can be verified locally while assembling the
discretization, and moreover it can be verified \emph{a priori} for certain
classes of meshes, see Section~\ref{sec7}. We recall two useful stability estimates.

\begin{lemma}\label{lem4.6}
\textup{a)} There exists a constant $c_\Dc$, only dependent on the regularity
of $\Dc$, such that for all $\ub\in\Hbc_\Dc$
\[
\abs{\ub}_\Dc^2\le c_\Dc \biggl(\sum_{K\in\Tc}\sum_{s\in\Vc_K}m_K^s ((\bnabla\ub)_K^s
)^2 +\sum_{\sigma\in\Fc_s\bigcap\Fc_K}\frac{m_K^s}{d_{K,\sigma}^2}\frac1{m_s^\sigma}
\sum_{\beta\in\Gc_s^\sigma}\omega_\beta (\dblbr{\ub}_s^{\sigma,\beta} )^2 \biggr)
\]
\textup{b)} Furthermore, there exists a constant $c_{L^2}$, only dependent on the
regularity of $\Dc$, such that for all $\ub\in\Pi_{FV,s}\Hbc_\Tc$
\[
\norm{\Pi_{L^2}\ub}_{L^2/\Rfr(\Omega)}^2\le c_{L^2}
\biggl(\sum_{K\in\Tc}\sum_{s\in\Vc_K}m_K^s ((\ubnabla\ub)_K^s)^2
+\sum_{\sigma\in\Fc_s\bigcap\Fc_K}\frac{m_K^s}{d_{K,\sigma}^2}\frac1{m_s^\sigma}
\sum_{\beta\in\Gc_s^\sigma}\omega_\beta(\dblbr{\ub}_s^{\sigma,\beta})^2\biggr)
\]
\end{lemma}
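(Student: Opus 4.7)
Both inequalities are discrete Korn--Poincar\'e-type estimates for the piecewise-linear reconstruction $\Pi_{L^2}$; I would address them in turn.

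Part (a) hinges on an algebraic inversion of the consistent gradient. The consistency relation~\eqref{eq3.3} makes $\{\gb_{K,\sigma}^s\}_{\sigma\in\Fc_K\cap\Fc_s}$ the dual basis of $\{\aver{\xb}_{K,s}^\sigma-\xb_K\}_{\sigma\in\Fc_K\cap\Fc_s}$, so substituting this duality into~\eqref{eq3.2} and solving for the coefficients yields the pointwise identity
\[
\aver{\ub}_{K,s}^\sigma-\ub_K=(\bnabla\ub)_K^s\cdot(\aver{\xb}_{K,s}^\sigma-\xb_K),
\]
valid for every $\ub\in\Hbc_\Dc$ and every admissible $(K,s,\sigma)$. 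Using the two-sided average $\aver{\ub}_s^\sigma=\tfrac12(\aver{\ub}_{R,s}^\sigma+\aver{\ub}_{L,s}^\sigma)$ on internal faces, the quantity of interest splits as
\[
\ub_K-\aver{\ub}_s^\sigma=-(\bnabla\ub)_K^s\cdot(\aver{\xb}_{K,s}^\sigma-\xb_K)+\frac1{2m_s^\sigma}\sum_{\beta\in\Gc_s^\sigma}\omega_\beta\,\dblbr{\ub}_s^{\sigma,\beta}.
\]
Squaring, applying mesh regularity $|\aver{\xb}_{K,s}^\sigma-\xb_K|\le Cd_{K,\sigma}$ to the first term and Cauchy--Schwarz together with $\sum_\beta\omega_\beta=m_s^\sigma$ to the second, and then multiplying by $m_K^s/d_{K,\sigma}^2$ and summing over $(K,s,\sigma)$ reproduces the right-hand side. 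The jump part of $\abs{\ub}_\Dc^2$ is inherited directly, and boundary faces are handled by the vanishing-Dirichlet convention.

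For part (b), I would start from Lemma~\ref{lem4.4}, which already bounds the broken full gradient by $\norm{\Pi_{L^2}\ub}_{L^2}^2$ plus exactly the desired right-hand side of~(b). It then suffices to establish the broken Korn--Poincar\'e inequality
\[
\norm{\Pi_{L^2}\ub}_{L^2/\Rfr(\Omega)}^2\le c\biggl(\sum_{K\in\Tc}\sum_{s\in\Vc_K}m_K^s((\ubnabla\ub)_K^s)^2+\text{jump terms}\biggr)
\]
for the piecewise-linear field $\Pi_{L^2}\ub$. This is precisely the broken estimate of~\cite{bre03}; the subcell partition introduces additional discontinuities at intra-cell interfaces, but these are controlled by the inter-cell jumps via continuity of $\Pi_{L^2}\ub$ at the cell centers $\xb_K$ and mesh regularity, exactly as in the proof of Lemma~\ref{lem4.4}. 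Note that the defining constraint of $\Pi_{FV,s}\Hbc_\Tc$, namely~\eqref{eq4.10}, ensures $\ub_{K,s}^{\sigma,\beta}=\Pi_{L^2}\ub(\xb_\beta)$, so the discrete quantities $\dblbr{\ub}_s^{\sigma,\beta}$ faithfully represent the $L^2$ jump norm of $\Pi_{L^2}\ub$ across cell faces.

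The main technical difficulty is securing $h$-independence of the constant $c_{L^2}$ in~(b). This is accomplished either by invoking~\cite{bre03} in a form whose constant depends only on mesh regularity, or by running a Peetre--Tartar compactness argument on a reference configuration followed by a standard scaling, exploiting that the null space of the right-hand side on $\Pi_{FV}\Hbc_\Tc$ is exactly $\Rfr(\Omega)$.
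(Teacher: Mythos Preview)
Your proposal is essentially correct and aligns with the paper. Part~(a) spells out what the paper compresses into one line (``follows readily from the definition of the semi-norm and the consistent gradient''); the key identity $\aver{\ub}_{K,s}^\sigma-\ub_K=(\bnabla\ub)_K^s\cdot(\aver{\xb}_{K,s}^\sigma-\xb_K)$ is exactly the duality encoded in~\eqref{eq3.3}, and the splitting into gradient and jump contributions is the natural way to make this explicit.

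For part~(b), your opening invocation of Lemma~\ref{lem4.4} is a red herring: it does not actually reduce the problem, and your ``it then suffices'' step merely restates~(b) itself. The paper proceeds directly by the null-space argument you sketch in your final paragraph. If the right-hand side vanishes, then each $(\ubnabla\ub)_K^s=\Ob$, so by~\eqref{eq3.8} the function $\Pi_{L^2}\ub$ is a rigid-body motion on every subcell; vanishing of the jump terms (second-order quadrature being exact on linears) forces these local motions to coincide globally, whence $\Pi_{L^2}\ub\in\Rfr(\Omega)$ and the left-hand side vanishes. Scaling then gives $h$-independence of $c_{L^2}$. This is precisely the compactness-plus-scaling route you offer as an alternative, so your final argument and the paper's coincide; the detour through~\cite{bre03} is a viable substitute but not the paper's choice.
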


\begin{proof}
The first inequality follows readily from the definition of
the semi-norm and the consistent gradient $(\bnabla\ub)_K^s$. The second
inequality is shown by contradiction. Let the right-hand side be zero. Then
$(\ubnabla\ub)_K^s=\Ob$, and due to \eqref{eq3.8} the function $\ub$ thus lies in
the space of rigid body motions on each subcell $(K,s)$. However, since
the second-order Gauss quadrature is exact for linear functions, $\ub$ is in
the space $\Rfr(\Omega)$. But then also the left-hand side of the inequality is
zero. The scaling of the norms then guarantees that the constant $c_{L^2}$
is independent of $h$.
\end{proof}

We are now prepared to state the global coercivity result for the method

\begin{theorem}\label{the4.7}
For given parameter fields $\mu$, $\lambda$, and mesh $\Dc$, let assumption~\ref{ass4.5} 
hold. Then the coarse variational problem~\eqref{eq4.13} is coercive in the
sense that it satisfies if \textup{a)} $\Gamma_D$ is measurable then for all $\ub\in\Hbc_\Tc$
and \textup{b)} if $\Gamma_N=\partial\Omega$ then for all $\ub\in\Hbc_\Tc/\Rfr(\Omega)$
\[
b_\Dc \bigl(\Pi_{FV} \ub,\Pi_\Cc (\Pi_{FV} \ub)\bigr)\ge\Theta\abs{u}_\Tc^2
\]
Where the constant $\Theta$ is dependent on the mesh triplet $\Dc$ but does not
scale with $h$, and is bounded below by:
\[
\Theta\ge \frac{\theta_2}{dc_\Dc c_K (1+c_{L^2})}\min\left(2\mu,\frac1{1+(c_K(1+c_{L^2}))^{-1}}\right)
\]
\end{theorem}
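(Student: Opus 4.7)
The plan is to chain together the local coercivity provided by Assumption~\ref{ass4.5}, the projection inequalities between $\Hc_\Tc$, $\Hc_\Cc$, $\Hc_\Dc$ recorded in section~\ref{sec2.2}, Korn's inequality in the piecewise-linear form of Lemma~\ref{lem4.4}, and the two stability estimates of Lemma~\ref{lem4.6}. The two components of $\Theta$ in the $\min$ will then emerge by balancing constants from the symmetric-gradient channel against those from the jump channel.

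First, I would sum Assumption~\ref{ass4.5} over all $s\in\Vc$. Using the localization \eqref{eq4.5} and $\Pi_{FV}\ub=\sum_s\Pi_{FV,s}\ub$, this yields
\[
b_\Dc\bigl(\Pi_{FV}\ub,\Pi_\Cc\Pi_{FV}\ub\bigr)\ge\theta_2\biggl(\sum_{s\in\Vc}\abs{\Pi_{FV}\ub}_{b_{\Dc,s}}^2+J(\Pi_{FV}\ub)\biggr),
\]
where I write $J(\cdot)$ for the global jump seminorm appearing on the right of \eqref{eq4.15}. Since $\abs{\cdot}_{b_{\Dc,s}}^2$ dominates $2\mu\sum_{K\in\Tc_s}m_K^s((\ubnabla\,\cdot\,)_K^s)^2$, this lower bound already controls the full symmetric-gradient energy $E(\Pi_{FV}\ub)$ and the full jump seminorm $J(\Pi_{FV}\ub)$.

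Second, I would reverse direction and bound $\abs{\ub}_\Tc^2$ in terms of the same two quantities. Because the cell values of $\Pi_{FV}\ub$ coincide with $\ub$, the projection chain in section~\ref{sec2.2} gives $\abs{\ub}_\Tc\le\sqrt{d}\abs{\Pi_\Cc\Pi_{FV}\ub}_\Cc\le\sqrt{d}\abs{\Pi_{FV}\ub}_\Dc$. Lemma~\ref{lem4.6}(a) converts $\abs{\Pi_{FV}\ub}_\Dc^2$ into $c_\Dc$ times the sum of the consistent gradients $\sum_{K,s}m_K^s((\bnabla\Pi_{FV}\ub)_K^s)^2$ and $J(\Pi_{FV}\ub)$. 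To trade the full gradient for the symmetric one I would apply Lemma~\ref{lem4.4} to $\Pi_{FV}\ub\in\Pi_{FV,s}\Hbc_\Tc$ and then eliminate the resulting $\norm{\Pi_{L^2}\Pi_{FV}\ub}_{L^2/\Rfr(\Omega)}^2$ term via Lemma~\ref{lem4.6}(b), arriving at
\[
\abs{\ub}_\Tc^2\le dc_\Dc\bigl(c_K(1+c_{L^2})E(\Pi_{FV}\ub)+(c_K(1+c_{L^2})+1)J(\Pi_{FV}\ub)\bigr).
\]

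Finally, combining this display with the first and requiring the two separate inequalities $\Theta\cdot dc_\Dc c_K(1+c_{L^2})\le 2\mu\theta_2$ and $\Theta\cdot dc_\Dc(c_K(1+c_{L^2})+1)\le\theta_2$ produces the claimed bound on $\Theta$ after rewriting the second as $\Theta\le\theta_2/[dc_\Dc c_K(1+c_{L^2})(1+(c_K(1+c_{L^2}))^{-1})]$. The main obstacle I anticipate is handling rigid body motions consistently: in case (a) the Dirichlet trace kills $\Rfr(\Omega)$ automatically and every step reads as stated, whereas in case (b) one must work throughout in the quotient $\Hbc_\Tc/\Rfr(\Omega)$ so that Assumption~\ref{ass4.5} and the $L^2/\Rfr(\Omega)$ form of Lemma~\ref{lem4.6}(b) are simultaneously applicable. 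Verifying that $\Rfr(\Omega)$ actually sits in the kernel of $b_\Dc$, of $J$, and of $\abs{\cdot}_\Tc$—which holds because rigid body motions have vanishing symmetric gradient and produce no jumps in the piecewise-linear reconstruction $\Pi_{L^2}\Pi_{FV}$—is the discrete analogue of the standard continuous check and is what keeps the quotient estimate meaningful.
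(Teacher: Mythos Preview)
Your proposal is correct and follows essentially the same route as the paper: sum Assumption~\ref{ass4.5} over vertices, combine Lemma~\ref{lem4.4} with Lemma~\ref{lem4.6}(b) to pass from symmetric to full consistent gradients, then use Lemma~\ref{lem4.6}(a) and the chain $\abs{\cdot}_\Tc\le\sqrt{d}\,\abs{\cdot}_\Cc\le\sqrt{d}\,\abs{\cdot}_\Dc$ to reach $\abs{\ub}_\Tc^2$. The only organizational difference is that the paper works ``forward'' from $b_\Dc$ and introduces an auxiliary splitting parameter $\xi\in(0,1)$ to balance the symmetric-gradient and jump channels, whereas you work ``backward'' by bounding $\abs{\ub}_\Tc^2$ above in terms of $E$ and $J$ and then compare coefficients directly; the two are algebraically equivalent and yield the same constant.
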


\begin{proof}
By the definition of the local bilinear forms, Assumption~\ref{ass4.5} and
the lower bounds on the coefficient functions $\mu$ and $\lambda$ we have for all
$\ub\in\Pi_{FV,s} \Hbc_\Tc$
\[
b_\Dc(\ub,\ub)\ge\theta_2 \biggl(2\underline{\mu}\sum_{K\in\Tc}\sum_{s\in\Vc_K}m_K^s 
((\ubnabla\ub)_K^s)^2
+\sum_{\sigma\in\Fc_s\bigcap\Fc_K}\frac{m_K^s}{d_{K,\sigma}^2}\frac1{m_s^\sigma}
\sum_{\beta\in\Gc_s^\sigma}\omega_\beta (\dblbr{\ub}_s^{\sigma,\beta})^2\biggr)
\]

We denote $c_\mu=\min(\underline{2\mu},1-\xi)$, for any $0<\xi<1$, and now due to Lemma~\ref{lem4.4} 
and \ref{lem4.6}a) we obtain
\[
b_\Dc (\ub,\ub)\ge\frac{\theta_2 c_\mu}{c_K(1+c_{L^2})}
\sum_{K\in\Tc}\sum_{s\in\Vc_K}m_K^s ((\bnabla\ub)_K^s)^2
+\theta_2\xi\sum_{\sigma\in\Fc_s\bigcap\Fc_K}\frac{m_K^s}{d_{K,\sigma}^2}\frac1{m_s^\sigma}
\sum_{\beta\in\Gc_s^\sigma}\omega_\beta (\dblbr{\ub}_s^{\sigma,\beta})^2
\] 
Utilizing Lemma~\ref{lem4.6}b) together with the basic norm inequalities stated
in Section~\ref{sec2.1} completes the proof when $\xi$ is chosen to maximize $\Theta$. 
\end{proof}

\section{Convergence of the method}\label{sec5}

Convergence of the scheme as given in Section~\ref{sec3} is now established for all
grid sequences satisfying a uniform coercivity bound. In particular, weak
convergence to some function $\widetilde{\ub}\in(H^1)^d$ of the discrete solution $\Pi_\Tc
\ub_\Dc$ and its gradient $\nabla_\Dc \ub_\Dc$ (defined below), follows immediately from
previous work \cite{age10}. The strong convergence of the gradient $\bnabla_\Dc \ub_\Dc$,
and establishing that $\widetilde{\ub}$ is a weak solution of Equations~\eqref{eq1.1}, requires
invoking the finite-volume projection $\Pi_{FV}$ due to the discontinuous nature
of the discrete space $\Hbc_\Dc$ and the mixed formulation of the saddle-point
problems used in the current work. We structure this section accordingly.

\begin{definitions}\label{defs5.1} 
We consider the following continuous extensions of the
cell-average finite volume gradients for discrete functions $\ub\in\Hbc_\Dc$:
\begin{equation}
\nabla_\Dc \ub(\xb)=(\widetilde{\nabla}\ub)_K\quad\text{for $K\in\Tc$, where $\xb\in K$}. \label{eq5.1}
\end{equation}
Furthermore, we consider the continuous extension of the consistent
gradient
\begin{equation}
\bnabla_\Dc \ub(\xb)=(\bnabla\ub)_K^s\quad\text{for $K\in\Tc$, $s\in\Tc$ where $\xb\in K_s$}.
\label{eq5.2}
\end{equation}
\end{definitions}

Note that from its definition, the discrete gradient satisfies the
stability estimate
\[
\norm{\bnabla_\Dc\ub}_{L_2}\le g_0 \norm{\ub}_\Dc
\]
Where $g_0=\max_{K\in\Tc,\sigma\in\Fc_K,s\in\Vc_\sigma}\abs{\gb_{K,\sigma}^s}/d_{K,\sigma}$, 
which depends on the regularity of the grid but not on $h$.

We now recall the following result:

\begin{lemma}\label{lem5.2}
Let $\Dc_n$ be a family of regular discretization triplets (in
the sense that mesh parameters remain bounded) such that $h_n\to0$, as
$n\to\infty$. Furthermore, let $\theta_1$ and $\Theta$ be bounded independently of $n$. Then
for all $n$, the solution $\ub_n$ of equations \eqref{eq3.7}--\eqref{eq3.8} exist and are unique,
there exists $\widetilde{\ub}\in(H_1 (\Omega))^d$, and up to a subsequence (still denoted
by $n$) $\Pi_\Tc \ub_n\to\widetilde{\ub}$ converges in $(L^q(\Omega))^d$, for $q\in[1,2d/(d-2+\epsilon))$
as $h_n\to0$. Finally, the cell-average finite volume gradient $\bnabla_\Dc
\ub_n$ converges weakly to $\nabla\widetilde{\ub}$ in $(L^2(\Omega))^{d^2}$.
\end{lemma}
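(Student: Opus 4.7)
The plan is to combine the coercivity result of Theorem~\ref{the4.7} with the compactness machinery previously established in the discrete functional framework of \cite{age10,eym06}, adapted to the enlarged discrete space $\Hbc_\Dc$ via the finite-volume projection $\Pi_{FV}$. First I would establish existence and uniqueness: because the reduced coarse problem \eqref{eq4.13} is finite-dimensional and $b_\Dc(\Pi_{FV}\cdot,\Pi_\Cc(\Pi_{FV}\cdot))$ is coercive on $\Hbc_\Tc$ (or on $\Hbc_\Tc/\Rfr(\Omega)$ in the pure-Neumann case), Theorem~\ref{the4.7} yields a unique $\ub_\Tc$, and Lemma~\ref{lem4.1} supplies the corresponding $\Pi_{FV}\ub_\Tc \in \Hbc_\Dc^\Fc$; together these determine $\ub_n \in \Hbc_\Dc$ uniquely.

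Next I would derive the basic a priori bounds. Testing \eqref{eq4.13} against $\ub_\Tc$ itself and applying Theorem~\ref{the4.7} gives
\[
\Theta\abs{\ub_\Tc}_\Tc^2 \le b_\Dc\bigl(\Pi_{FV}\ub_\Tc,\Pi_\Cc(\Pi_{FV}\ub_\Tc)\bigr) = \int_\Omega \fb\cdot \Pi_\Tc\ub_\Tc\, d\xb \le \norm{\fb}_{L^2}\norm{\Pi_\Tc\ub_\Tc}_{L^2},
\]
and the discrete Sobolev inequality from Section~\ref{sec2.2} turns this into a uniform bound $\abs{\ub_\Tc}_\Tc \le C\Theta^{-1}\norm{\fb}_{L^2}$. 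Definition~\ref{def4.2} with the uniform bound on $\theta_1$ gives $\abs{\Pi_{FV}\ub_\Tc}_\Dc \le \theta_1 \abs{\ub_\Tc}_\Tc$, and the stability estimate for $\bnabla_\Dc$ stated after Definition~\ref{defs5.1} then yields $\norm{\bnabla_\Dc \ub_n}_{L^2} \le C$ independent of $n$.

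Compactness and weak convergence would then follow in essentially the manner of \cite{age10,eym06}. The uniform bound on $\abs{\ub_n}_\Tc$, together with the discrete Rellich--Kondrachov theorem for the space $\Hbc_\Tc$ proved there, extracts a subsequence with $\Pi_\Tc\ub_n \to \widetilde{\ub}$ strongly in $(L^q(\Omega))^d$ for any $q \in [1,2d/(d-2+\epsilon))$, and the uniform $L^2$ bound on $\bnabla_\Dc\ub_n$ extracts a further subsequence with $\bnabla_\Dc\ub_n \rightharpoonup \boldsymbol{\chi}$ weakly in $(L^2(\Omega))^{d^2}$. It remains to identify $\boldsymbol{\chi} = \nabla\widetilde{\ub}$, which then places $\widetilde{\ub}$ in $(H^1(\Omega))^d$.

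The main obstacle is precisely this last identification, because the previous work treats the continuous space $\Hc_\Cc$ while our gradient $\bnabla_\Dc$ acts on the enlarged, discontinuous space $\Hc_\Dc$. The strategy I would follow is a discrete integration-by-parts argument against a smooth test field $\phib \in (C_c^\infty(\Omega))^d$: using the exactness \eqref{eq3.3} of $\gb_{K,\sigma}^s$ on linear polynomials, one rewrites $\int_\Omega \bnabla_\Dc \ub_n : \phib\,d\xb$ as a sum of subcell jump terms plus a term in which $\ub_n$ is replaced by $\Pi_\Tc \ub_n$ tested against the discrete divergence of $\phib$. The jump terms vanish in the limit because the constraint \eqref{eq3.8} forces the face traces of $\Pi_{FV}\ub_n$ to match the subcell linear reconstruction up to jumps whose norm is controlled by $\abs{\ub_n}_\Dc$ and which scale with $h_n$ under the regularity assumptions. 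The remaining term converges to $-\int_\Omega \widetilde{\ub}\cdot\nabla\cdot\phib\,d\xb$ by strong $L^q$ convergence of $\Pi_\Tc\ub_n$ and the smoothness of $\phib$, which simultaneously places $\widetilde{\ub}$ in $(H^1(\Omega))^d$ and identifies $\boldsymbol{\chi}=\nabla\widetilde{\ub}$, completing the proof.
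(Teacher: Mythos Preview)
Your proposal is correct and follows exactly the approach the paper intends: the paper's own proof is a one-line appeal to the coercivity established in Section~\ref{sec4} together with the compactness machinery of \cite{eym07,age10}, and your sketch simply unpacks that citation into its standard components (existence/uniqueness from Theorem~\ref{the4.7} and Lemma~\ref{lem4.1}, the a~priori bound via testing \eqref{eq4.13} and the discrete Sobolev inequality, Rellich--Kondrachov compactness, and identification of the weak limit by discrete integration by parts). Your remark that the identification step requires care because $\bnabla_\Dc$ lives on $\Hbc_\Dc$ rather than $\Hbc_\Cc$ is well taken; the paper does not flag this for Lemma~\ref{lem5.2} but explicitly acknowledges the same adaptation is needed for the subsequent Lemma~\ref{lem5.3}, and your use of the constraint \eqref{eq3.8} to control the extra jump contributions is the natural way to close the gap.
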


\begin{proof}
The proof follows immediately from the coercivity of the scheme
(Section~\ref{sec4}) and the compactness arguments detailed in \cite{eym07} and \cite{age10}. 
\end{proof}

We also state the strong convergence of the consistent gradient $\bnabla_\Dc
\ub(x)$. This result was achieved for the scalar diffusion equation in \cite{age10},
where the discrete solution lies in $\Hc_\Cc$. We can extend their calculation
to the present case as summarized below.

\begin{lemma}\label{lem5.3}
Consider the same case as in Lemma~\ref{lem5.2}. Then the consistent
gradient $\bnabla_\Dc \ub_n$ converges strongly to $\nabla\widetilde{\ub}$ in $(L^2 (\Omega))^{d^2}$.
\end{lemma}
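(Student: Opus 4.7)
The plan is to promote the weak $L^2$-convergence of $\bnabla_\Dc\ub_n$ from Lemma~\ref{lem5.2} to strong convergence by establishing convergence of the $L^2$-norms. Writing
\[
\|\bnabla_\Dc\ub_n - \nabla\widetilde{\ub}\|_{L^2}^2 = \|\bnabla_\Dc\ub_n\|_{L^2}^2 - 2\int_\Omega \bnabla_\Dc\ub_n : \nabla\widetilde{\ub}\,d\xb + \|\nabla\widetilde{\ub}\|_{L^2}^2,
\]
the weak convergence handles the cross term, so it suffices to show $\|\bnabla_\Dc\ub_n\|_{L^2}\to\|\nabla\widetilde{\ub}\|_{L^2}$. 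Since Hooke's law only couples the symmetric gradient and the divergence, I would target convergence of the $L^2$-norms of the symmetric part of $\bnabla_\Dc\ub_n$ and of its trace, and then recover convergence of the full consistent gradient via the discrete Korn-type estimate in Lemma~\ref{lem4.4}.

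The norm convergence of these symmetric quantities is driven by an energy identity. Setting $\vb = \ub_\Tc$ in the decoupled coarse problem~\eqref{eq4.13} yields
\[
b_\Dc\bigl(\Pi_{FV}\ub_\Tc,\Pi_\Cc(\Pi_{FV}\ub_\Tc)\bigr) = \int_\Omega \fb\cdot\Pi_\Tc\ub_\Tc\,d\xb,
\]
whose right-hand side converges to $\int_\Omega\fb\cdot\widetilde{\ub}\,d\xb$ by Lemma~\ref{lem5.2}. To identify this limit as a continuous energy, I would first verify that $\widetilde{\ub}$ is a weak solution of~\eqref{eq1.1}. For a smooth $\phib\in(C_c^\infty(\Omega))^d$, define the canonical interpolant $\phib_\Dc\in\Hbc_\Dc$ by $(\phib_\Dc)_K = \phib(\xb_K)$ and $(\phib_\Dc)_{K,s}^{\sigma,\beta} = \phib(\xb_\beta)$. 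The exactness of $\bnabla$ on linear functions (equation~\eqref{eq3.3}) together with Taylor estimates give strong $L^\infty$-convergence of $\bnabla_\Dc\phib_\Dc$ and of the finite-volume gradient of $\Pi_\Cc\phib_\Dc$ to $\nabla\phib$. Testing~\eqref{eq3.7} with $\Pi_\Cc\phib_\Dc$ and passing to the limit using the weak convergence from Lemma~\ref{lem5.2} then yields the weak formulation~\eqref{eq1.2} for $\widetilde{\ub}$. Choosing $\vb=\widetilde{\ub}$ in~\eqref{eq1.2} identifies the right-hand-side limit of the energy identity as $\int_\Omega 2\mu|\unabla\widetilde{\ub}|^2 + \lambda(\nabla\cdot\widetilde{\ub})^2\,d\xb$.

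The remaining step is to tie the discrete energy $b_\Dc(\ub_n,\Pi_\Cc\ub_n)$ to the target $L^2$-norms of the consistent symmetric gradient and divergence. Assumption~\ref{ass4.5}, summed over $s\in\Vc$, provides a lower bound on the discrete energy in terms of precisely these quantities plus jump contributions, which combined with the identity above delivers the desired $\limsup$ inequality. The matching $\liminf$ inequality follows from weak lower semicontinuity of the $L^2$-norm. The main obstacle is tightness of this comparison in the limit: the form $b_\Dc(\ub,\Pi_\Cc\ub)$ couples the consistent gradient on $\ub$ with the finite-volume gradient on $\Pi_\Cc\ub$, whereas the target involves only the consistent gradient. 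Closing this gap requires exploiting the constraints~\eqref{eq3.8}, which pin $\ub_n$ to piecewise affine profiles on subcells, together with the minimization property~\eqref{eq3.9}, which forces the jump terms $\dblbr{\ub_n}_s^{\sigma,\beta}$ to vanish asymptotically in the weighted norm appearing in Assumption~\ref{ass4.5}. Once the discrete and continuous energies agree in the limit, norm convergence combines with weak convergence to deliver $\bnabla_\Dc\ub_n\to\nabla\widetilde{\ub}$ strongly in $(L^2(\Omega))^{d^2}$.
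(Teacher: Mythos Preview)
Your ``weak convergence $+$ norm convergence $\Rightarrow$ strong convergence'' strategy breaks down at two places, both rooted in the asymmetry of the bilinear form $b_\Dc$, which pairs the \emph{consistent} gradient of the trial function with the \emph{finite-volume} gradient of the test function. First, your claim that the subcell finite-volume gradient of the smooth interpolant $\Pi_\Cc\phib_\Dc$ converges strongly (in $L^\infty$) to $\nabla\phib$ is false in general: the subcell operator $(\widetilde{\nabla}\,\cdot\,)_K^s$ from Definition~\ref{def3.1} is \emph{not} exact for linear functions---indeed $(\widetilde{\nabla}\xb)_K^s\neq\Ib$ because the sum over $\sigma\in\Fc_K\cap\Fc_s$ omits the internal subcell boundaries. (This inconsistency is precisely why the separate consistent gradient~\eqref{eq3.3} is introduced.) Without strong convergence of the test gradient you cannot pass to the limit in $b_\Dc(\ub_n,\Pi_\Cc\phib_\Dc)$ using only weak convergence of $\bnabla_\Dc\ub_n$, so your embedded proof that $\widetilde{\ub}$ solves~\eqref{eq1.2} does not go through at this stage. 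In the paper this identification is in fact deferred to Theorem~\ref{the5.4}, where the \emph{strong} convergence of $\bnabla_\Dc\ub_n$ (the very statement you are trying to prove) is what makes the limit passage work.

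Second, even granting the energy identity $b_\Dc(\ub_n,\Pi_\Cc\ub_n)\to\int_\Omega 2\mu|\unabla\widetilde{\ub}|^2+\lambda(\nabla\cdot\widetilde{\ub})^2$, this does not yield convergence of $\int_\Omega 2\mu|\ubnabla_\Dc\ub_n|^2+\lambda(\bnabla_\Dc\cdot\ub_n)^2$. Assumption~\ref{ass4.5} only gives $b_\Dc(\ub_n,\Pi_\Cc\ub_n)\ge\theta_2\,|\ub_n|_{b_\Dc}^2$ with a constant $\theta_2$ that is generically strictly less than one, so the $\limsup$ bound you obtain is too loose to match the $\liminf$ from weak lower semicontinuity. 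Vanishing of the jump terms (which you assert but do not prove) does not close this gap either: even for a function with zero jumps, $(\widetilde{\nabla}\Pi_\Cc\ub)_K^s$ differs from $(\bnabla\ub)_K^s$ by the factor $(\widetilde{\nabla}\xb)_K^s$, so $b_\Dc(\ub,\Pi_\Cc\ub)$ and $|\ub|_{b_\Dc}^2$ remain distinct. The paper avoids all of this by a direct decomposition: insert a smooth $\phib$ approximating $\widetilde{\ub}$, split $\bnabla_\Dc\ub_n-\nabla\widetilde{\ub}$ into $\bnabla_\Dc(\ub_n-\Pi_{FV}\Pi_\Tc\phib)$ plus two consistency remainders, and bound the first piece via coercivity applied to $\ub_n-\Pi_{FV}\Pi_\Tc\phib$ (both factors then lie in the range of $\Pi_{FV}$, where the asymmetry is harmless).
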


\begin{proof}
We need to show that
\begin{equation}
\lim_{n\to\infty}\int_\Omega(\bnabla_\Dc \ub_n-\nabla\widetilde{\ub})^2\,d\xb =0
\label{eq5.3}
\end{equation}
Introducing a function $\phib\in(C^\infty (\Omega))^d$ which approximates $\widetilde{\ub}$,
and using the identity
\[
\bnabla_\Dc \ub_n-\nabla\widetilde{\ub}=\bnabla_\Dc (\ub_n-\Pi_{FV} \Pi_\Tc \phib)
+\bnabla_\Dc (\Pi_{FV} \Pi_\Tc
\phib-\Pi_\Cc \phib)+\bnabla_\Dc (\Pi_\Cc \phib)-\nabla\widetilde{\ub}
\]
we can bound the integral in \eqref{eq5.3} by
\begin{multline}
\int_\Omega(\bnabla_\Dc \ub_n-\nabla\widetilde{\ub})^2\,d\xb\\
\begin{aligned}[b]
\le 3\int_\Omega\bigl(\bnabla_\Dc(\ub_n-\Pi_{FV} \Pi_\Tc \phib)\bigr)^2\,d\xb 
&+3\int_\Omega\bigl(\bnabla_\Dc (\Pi_{FV} \Pi_\Tc \phib-\Pi_\Cc\phib)\bigr)^2\,d\xb\\
&+3\int_\Omega\bigl(\bnabla_\Dc (\Pi_\Cc \phib)-\nabla\widetilde{\ub}\bigr)^2\,d\xb 
\end{aligned}
\label{eq5.4}
\end{multline}

The second term on the right-hand side converges since the projection
operators are exact for linear functions, while the last term vanishes for
$n\to\infty$ \cite{age10}, so it suffices to deal with the first right-hand side term
in equation~\eqref{eq5.4}. However, since $\ub_n=\{\Pi_\Tc u_{\Dc,n},\Pi_{FV} \Pi_\Tc u_{\Dc,n}\}$,
the first term is bounded by the bilinear form $b_\Dc$ due to coercivity.
A straight-forward calculation exploiting that $\phib$ approximates $\widetilde{\ub}$
then verifies that all terms on the right-hand side of equation~\eqref{eq5.4}
converge to zero. 
\end{proof}

The preceding definitions and lemmas allow us to state the main convergence
result for the MPSA method.

\begin{theorem}\label{the5.4}
Consider the same case as in Lemma~\ref{lem5.2}. Then the limit $\widetilde{\ub}\in(H^1(\Omega))^d$ of the discrete mixed variational problem \eqref{eq3.7}--\eqref{eq3.9},
and consequently the MPSA O-method, is the unique weak solution of the
problem \eqref{eq1.1}.
\end{theorem}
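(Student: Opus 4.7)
The plan is to establish that $\widetilde{\ub}$ satisfies the continuous weak formulation~\eqref{eq1.2} by passing to the limit in the discrete equation~\eqref{eq3.7} against smooth test functions, then conclude via density and uniqueness of the continuous weak solution. Fix $\phib \in (C^\infty(\overline{\Omega}))^d$ compatible with the boundary conditions, and for each mesh $\Dc_n$ form the natural interpolant $\vb_n \in \Hbc_\Cc$ by $(\vb_n)_K = \phib(\xb_K)$ and $(\vb_n)_s^\sigma = \phib(\xb_\sigma^s)$. Using $\vb_n$ as the test function in~\eqref{eq3.7} and re-expressing $b_\Dc$ as an integral over $\Omega$ via the piecewise extensions of Definitions~\ref{defs5.1} yields the discrete identity
\[
\int_\Omega \left(2\mu_{\Dc_n}\,\ubnabla_\Dc\ub_n:\widetilde{\unabla}_\Dc\vb_n+\lambda_{\Dc_n}(\bnabla_\Dc\cdot\ub_n)(\widetilde{\nabla}_\Dc\cdot\vb_n)\right)d\xb = \int_\Omega \fb\cdot\Pc_{\Cc,\Tc}\vb_n\,d\xb,
\]
where $\mu_{\Dc_n},\lambda_{\Dc_n}$ are the piecewise-constant subcell averages of the coefficients.

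The crux of the argument is passing to the limit $n\to\infty$ in this identity. The right-hand side is handled directly: $\Pc_{\Cc,\Tc}\vb_n \to \phib$ in $(L^2(\Omega))^d$ by standard piecewise-constant approximation. For the left-hand side, Lemma~\ref{lem5.3} supplies the strong convergence $\bnabla_\Dc\ub_n \to \nabla\widetilde{\ub}$ in $(L^2(\Omega))^{d\times d}$, while a direct Taylor expansion of $\phib$ around $\xb_K$, combined with the geometric identities underlying Definition~\ref{def3.1}, yields $\widetilde{\nabla}_\Dc\vb_n \to \nabla\phib$ strongly in $(L^2(\Omega))^{d\times d}$. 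Since $\mu_{\Dc_n},\lambda_{\Dc_n}$ are uniformly bounded and converge in $L^p(\Omega)$ for any finite $p$ to $\mu,\lambda$, the integrands on the left converge in $L^1(\Omega)$, and the limit identity reads
\[
\int_\Omega \left(2\mu\,\unabla\widetilde{\ub}:\unabla\phib+\lambda(\nabla\cdot\widetilde{\ub})(\nabla\cdot\phib)\right)d\xb = \int_\Omega \fb\cdot\phib\,d\xb,
\]
which is precisely~\eqref{eq1.2} tested against $\phib$.

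Density of smooth functions in the appropriate subspace of $\Hb^1(\Omega)$ (respectively $\Hb^1(\Omega)/\Rfr(\Omega)$ in the pure Neumann case) extends the identity to all admissible test vectors, so $\widetilde{\ub}$ is a weak solution of~\eqref{eq1.1}. Since the continuous problem has a unique weak solution, $\widetilde{\ub}$ is uniquely identified, and a standard subsequence-within-subsequence argument upgrades the convergences of Lemmas~\ref{lem5.2} and~\ref{lem5.3} from subsequential to full-sequence convergence. The main delicacy, and the reason Lemma~\ref{lem5.3} was proved as a preparatory step, is the Petrov--Galerkin structure of $b_\Dc$: the bilinear form pairs the consistent gradient $\bnabla$ of the trial function against the finite-volume gradient $\widetilde{\nabla}$ of the test function, so the limit must be taken on a product of two genuinely different discrete gradient operators. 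Strong convergence of $\bnabla_\Dc\ub_n$ together with the built-in consistency of $\widetilde{\nabla}_\Dc$ on smooth interpolants is precisely what collapses this asymmetric pairing onto the symmetric bilinear form of~\eqref{eq1.2}.
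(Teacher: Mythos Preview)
Your overall strategy---test~\eqref{eq3.7} with an interpolant of a smooth function, pass to the limit, invoke density and uniqueness---matches the paper's. The gap is in the limiting step on the left-hand side. You assert that a Taylor expansion together with ``the geometric identities underlying Definition~\ref{def3.1}'' gives strong convergence $\widetilde{\nabla}_\Dc\vb_n\to\nabla\phib$ in $(L^2(\Omega))^{d\times d}$. This is false: unlike the consistent gradient $\bnabla$ of Definition~\ref{def3.2}, the finite-volume gradient of Definition~\ref{def3.1} is \emph{not} exact for linear displacements at the subcell level. Concretely, for your interpolant and a linear $\phib=\Pb\xb$ one computes
\[
(\widetilde{\nabla}\vb_n)_K^s=\Pb\cdot\biggl(\frac{1}{m_K^s}\sum_{\sigma\in\Fc_K\cap\Fc_s}m_\sigma^s(\xb_\sigma^s-\xb_K)\otimes\nb_{K,\sigma}\biggr),
\]
and the bracketed matrix is \emph{not} the identity in general (the divergence-theorem identity $\sum_\sigma m_\sigma(\bar{\xb}_\sigma-\xb_K)\otimes\nb_{K,\sigma}=m_K\Ib$ holds only when the sum runs over the full boundary $\Fc_K$, not over the faces meeting a single vertex). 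On a unit square with $\xb_K$ at the center and $s$ a corner, the bracket equals $\bigl(\begin{smallmatrix}1&1/2\\1/2&1\end{smallmatrix}\bigr)$, so $(\widetilde{\nabla}\vb_n)_K^s$ converges to $\nabla\phib$ times a fixed geometry-dependent matrix, and strong $L^2$-convergence to $\nabla\phib$ fails.

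The paper's proof differs in two ways that address exactly this point. First, the test function is taken as $\Pi_\Cc\Pi_{FV}\Pi_\Tc\vb$, i.e.\ the image of the smooth function under the finite-volume projection---this is the natural test function for the reduced coarse problem~\eqref{eq4.13}. Second, the paper explicitly invokes \emph{both} the strong convergence of $\bnabla_\Dc\ub_n$ (Lemma~\ref{lem5.3}) and the \emph{weak} convergence of the finite-volume gradient $\nabla_\Dc\ub_n$ (Lemma~\ref{lem5.2}); the weak convergence is precisely what absorbs the inconsistency of $\widetilde{\nabla}$ at the subcell level. Your argument can be repaired along the same lines: replace the claimed strong convergence of $\widetilde{\nabla}_\Dc\vb_n$ by weak convergence (which does hold, since summing the bracketed matrices over $s\in\Vc_K$ recovers $m_K\Ib$ by the full-cell divergence theorem, so the error averages out against any fixed $L^2$ test tensor), and then pair this weak convergence with the strong convergence of $\mu_{\Dc_n}\ubnabla_\Dc\ub_n$ supplied by Lemma~\ref{lem5.3}.
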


\begin{proof}
Lemmas~\ref{lem5.2} and \ref{lem5.3} establish that the limit $\widetilde{\ub}\in(H^1(\Omega))^d$
exists, and the appropriate notion of convergence of the discrete
gradients. It remains to show that $\widetilde{\ub}$ is a weak solution of problem
\eqref{eq1.1}. Uniqueness then follows from the uniqueness of weak solutions to
\eqref{eq1.1}. Recall again that $\ub_n=\Pi_{FV}\ub_{\Tc,n}$. Then due to the stability of
the projections and the strong and weak convergence of $\bnabla_\Dc \ub_n$ and
$\nabla_\Dc \ub_n$, respectively, it follows that for all $\ub,\vb\in(C^\infty (\Omega))^d$
\begin{equation}
\lim_{n\to\infty}b_\Dc (\Pi_{FV} \Pi_\Tc \ub,\Pi_\Cc \Pi_{FV} \Pi_\Tc\vb)
=\int_\Omega 2\mu\unabla \ub:\unabla \vb + \lambda(\nabla\cdot\ub)(\nabla\cdot\vb)\,d\xb 
\label{eq5.5}
\end{equation}
and
\begin{equation}
\lim_{n\to\infty}\int_\Omega\fb\cdot\Pi_\Tc\{\Pi_\Tc\vb,\Ob_\Fc\}\,d\xb
=\int_\Omega\fb\cdot\vb\,d\xb 
\label{eq5.6}
\end{equation}
However, since \eqref{eq5.5} and \eqref{eq5.6} are the terms weak form of equation~\eqref{eq1.1},
as stated in equations~\eqref{eq1.2}, and since $C^\infty$ is dense in $H^1$, it follows
that the limit $\widetilde{\ub}$ is the weak solution to \eqref{eq1.1}. 
\end{proof}

\section{Comments on the method}\label{sec6}

In this section we present various comments on the developments of
sections~\ref{sec2}--\ref{sec6}. In particular, we comment on 1) the application to pure
Dirichlet problems with homogeneous coefficients, 2) Reduced quadrature on
simplex grids, 3) the corresponding finite volume for the scalar diffusion
equation, and 4) a related finite difference method.

\subsection{Homogeneous Dirichlet problems}\label{sec6.1}

For the special case where $\Gamma_D=\partial\Omega$, and the Lam\'e coefficients $\mu$
and $\lambda$ are constant on $\Omega$, it is well known that equations~\eqref{eq1.1} can be
re-written by integration-by-parts as \cite{kik88}:
\begin{equation}
\begin{aligned}
&\nabla\cdot\sig=\fb&&\text{in }\Omega\\ 
&\sig=\mu\nabla\ub+(\lambda+\mu)(\nabla\cdot\ub)\Ib&&\text{in }\Omega\\
&\ub=\Ob&&\text{on }\partial\Omega
\end{aligned}
\label{eq6.1}
\end{equation}

This form is locally coercive, since the symmetrized gradient does not
appear. We may proceed to discretize equation~\eqref{eq6.1} as in the preceding
sections, but with the bilinear form defined in equation~\eqref{eq3.4} replaced by:
\begin{equation}
b_\Dc (\ub,\vb)=\sum_{K\in\Tc}\sum_{s\in\Vc_K}m_K^s \bigl(\mu(\bnabla\ub)_K^s:
(\widetilde{\nabla}\ub)_K^s+(\lambda+\mu)(\bnabla\cdot\ub)_K^s(\widetilde{\nabla}\cdot\ub)_K^s\bigr) 
\label{eq6.2}
\end{equation}

The resulting numerical method is also locally coercive and the
Korn's inequality (section~\ref{sec4.3}) is not needed to show coercivity of
the method. While the local coercivity assumption~\ref{ass4.5} is still needed,
this locally coercive formulation will due to the absence of Korn's
inequality have a more favorable global coercivity constant.

\subsection{Reduced integration on simplex grids}\label{sec6.2}

It is possible to consider using lower-order quadrature by choosing a
smaller set of Gauss points $\Gc_\sigma^s$. In particular, choosing a single Gauss
quadrature point leads to the MPSA vector analog of the classical MPFA
O($\eta$) methods for the scalar equation, where the $\eta$ is a parameterization
of the location of the Gauss point \cite{aav02,aav96}. In this setting the quadrature
point will act as a point of strong continuity over the faces $\sigma\in\Fc$
between the (linear) solution in the two adjacent subcells $\Tc_\sigma$. On general
grids, this does \emph{not} lead to a well-posed discretization, since the local
mixed system \eqref{eq4.7}--\eqref{eq4.8} is no longer well-posed: There exists for this
case a non-trivial element of the kernel of $a_{\Dc,s}$ which satisfies the
constraints given by $b_{\Dc,s}$ and $c_{\Dc,s}$.

However, for simplex grids with strictly acute corners, numerical
experiments indicate that the local mixed system remains well-posed
\cite{nor14b}. In this case, it is particularly attractive to consider the MPSA
O($1/3$) method. This parameterization implies that for all $(K,s)\in\Tc$, $\Vc_K$ the
Gauss quadrature points $\Gc_\sigma^s$ are chosen such that the points $\xb_K$, $\xb_\beta$
(for all $\beta=\Gc_\sigma^s$ with $\sigma\in\Fc_K\cap\Fc_s$) and the location of the vertex
$s$ for a parallelogram in 2D and a parallelepiped in 3D. In this case,
a straight-forward calculation shows that the two discrete gradients
coincide, since
\begin{equation}
\gb_{K,\sigma}^s=\frac{m_\sigma^s}{m_K^s}\times\nb_{K,\sigma}
\label{eq6.3}
\end{equation}
Consequently, the bilinear form $b_\Dc$ is symmetric \cite{kla08,eig05,age10}.

\subsection{Corresponding discretization for the scalar diffusion equation}\label{sec6.3}

The discretization analyzed herein can be directly applied to the scalar
diffusion equation, and represents a new method in that context.

For the scalar diffusion equation, the analysis presented herein simplifies
somewhat. In particular, the use of Korn's inequality can be omitted
(as was also the case in Section~\ref{sec6.1}), since the scalar bilinear form
is locally coercive. Thus the local coercivity conditions can be stated
directly, and the global coercivity of the method can be obtained by
simple summation. The convergence of the method then follows by identical
arguments to those used in Section~\ref{sec5}.

\subsection{A related finite difference method}\label{sec6.4}

The local coercivity Assumption~\ref{ass4.5} can be avoided by considering
the related finite difference method. Indeed, consider the symmetric
bilinear form
\begin{equation}
b_\Dc^* (\ub,\vb)=\sum_{K\in\Tc}\sum_{s\in\Vc_K}m_K^s \bigl(2\mu_K (\ubnabla\ub)_K^s:
(\ubnabla\vb)_K^s+\lambda_K (\bnabla\cdot\ub)_K^s (\bnabla\cdot\vb)_K^s\bigr) 
\label{eq6.4}
\end{equation}
Then we can formulate a finite difference method by considering the
discrete mixed variational problem: Find $(\ub_\Dc,\yb_\Cc,\yb_\Dc )\in\Hbc_\Dc\times\Hbc_\Cc\times\Hbc_\Dc$
such that
\begin{equation}
b_\Dc^*(\ub_\Dc,\vb)=\int_\Omega\fb\cdot \Pc_{\Cc,\Tc}\vb\,d\xb\quad\text{for all }\vb\in\Hbc_\Cc
\label{eq6.5}
\end{equation}
together with equations~\eqref{eq3.8} and \eqref{eq3.9} hold. This finite difference method
clearly benefits from all the results given in Sections~\ref{sec4} and \ref{sec5}, however
without the requirement of a local coercivity condition. Furthermore,
the method provides a symmetric discretization, however these benefits
come at the cost of a loss of exact force balance on each grid cell $K\in\Tc$.

\section{Local coercivity assumption for special grids}\label{sec7}

The key assumption in the proof is that there exists a class of grids
satisfying Assumption~\ref{ass4.5}. In this section, we will verify that such
grids exist. To be precise, we give a sufficient condition on the sub-cell
geometry to guarantee coercivity. This will allow us to establish \emph{a priori}
coercivity on regular triangulations of hexagons, squares, and equilateral
parallelograms. Furthermore, we verify unconditional coercivity of the
reduced integration proposed for simplex grids in section~\ref{sec6.2}. Finally,
we discuss the role of the Poisson ratio and locking in Section~\ref{sec7.3}.

\subsection{Vertex-symmetric meshes}\label{sec7.1}

We introduce the following notion, valid for 2D grids.

\begin{definition}\label{def7.1}
We refer to a subcell $(s,K)\in\Vc\times\Tc_s$ as \emph{vertex-symmetric}
if it is symmetric with respect to the line through the points
$(\xb_K,\xb_s)$. Similarly, we refer to a mesh as vertex-symmetric if all
subcells in the mesh triplet are vertex-symmetric.
\end{definition}

Vertex-symmetric meshes include the regular triangulations of hexagons,
squares, and equilateral parallelograms.

We first state a preliminary lemma which resembles the coercivity
assumption:

\begin{lemma}\label{lem7.2}
For every vertex $s\in\Vc$ in a vertex-symmetric mesh, there exists
a constant $\theta_{2,s}^*\ge\theta_2^*>0$ such that the bilinear forms $b_{\Dc,s}$
satisfy for all $\ub\in\Pi_{FV,s} \Hbc_\Tc/\Rfr(\Omega)$
\[
b_{\Dc,s} (\ub,\Pi_\Cc \ub)\ge\theta_{2,s} \abs{\ub}_{b_{\Dc,s}}^2
\]
\end{lemma}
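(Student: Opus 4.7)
The plan is to reduce the inequality to a finite-dimensional strict positivity statement, then to exploit the reflection symmetry of each subcell to compare the finite-volume symmetric gradient $(\widetilde{\unabla}\Pi_\Cc\ub)_K^s$ with the consistent symmetric gradient $(\ubnabla\ub)_K^s$. First, $\ub\mapsto b_{\Dc,s}(\ub,\Pi_\Cc\ub)$ is a quadratic form on the finite-dimensional space $\Pi_{FV,s}\Hbc_\Tc/\Rfr(\Omega)$, so by compactness on the unit sphere of $\abs{\cdot}_{b_{\Dc,s}}$ it suffices to show strict positivity wherever the right-hand side does not vanish. Lemma~\ref{lem4.6}(b), applied locally to a vertex neighbourhood, identifies the zero-set of $\abs{\cdot}_{b_{\Dc,s}}$ on $\Pi_{FV,s}\Hbc_\Tc$ with $\Rfr(\Omega)$, so the quotient is well defined; the uniform lower bound $\theta_2^*$ then follows from the compactness of admissible subcell shapes under the mesh regularity assumption.

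Second, I derive a quantitative relation between the two gradients. The constraint $c_{\Dc,s}(\ub,\cdot)=0$ forces, for $\ub\in\Pi_{FV,s}\Hbc_\Tc$, the interpolation identity
\begin{equation*}
\aver{\ub}_{K,s}^\sigma=\ub_K+(\bnabla\ub)_K^s\cdot(\aver{\xb}_{K,s}^\sigma-\xb_K),
\end{equation*}
and, writing $\aver{\ub}_s^\sigma$ as the half-sum of $\aver{\ub}_{K,s}^\sigma$ across each internal face, substitution into Definition~\ref{def3.1} produces a splitting
\begin{equation*}
(\widetilde{\nabla}\Pi_\Cc\ub)_K^s=(\bnabla\ub)_K^s\cdot M_K^s+J_K^s(\ub),
\end{equation*}
where $M_K^s=(m_K^s)^{-1}\sum_{\sigma}m_\sigma^s(\aver{\xb}_{K,s}^\sigma-\xb_K)\otimes\nb_{K,\sigma}$ is a purely geometric tensor attached to the subcell and $J_K^s(\ub)$ is a linear functional of the averaged jumps $\dblbr{\ub}_s^\sigma$ only.

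Third, I would use vertex-symmetry. Let $R_K^s$ denote the reflection across the line through $\xb_K$ and $\xb_s$. Vertex-symmetry means each subcell is $R_K^s$-invariant, so both $(m_\sigma^s/m_K^s)\nb_{K,\sigma}$ and $\aver{\xb}_{K,s}^\sigma-\xb_K$ are $R_K^s$-equivariant under the action that interchanges the two faces of $K$ meeting at $s$. A short calculation, exploiting $R_K^s=(R_K^s)^{-1}=(R_K^s)^T$, shows that the antisymmetric part of $M_K^s$ vanishes, so $M_K^s$ is symmetric with positive eigenvalues bounded below by mesh regularity. Consequently $(\ubnabla\ub)_K^s:((\bnabla\ub)_K^s\cdot M_K^s)$ is bounded below by a constant multiple of $((\ubnabla\ub)_K^s)^2$, and analogously for the trace products entering the $\lambda$-piece of $b_{\Dc,s}$. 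The cross terms involving $J_K^s(\ub)$ must then be handled by pairing, through the reflection, the jump contributions from the two subcells sharing a face at $s$; vertex-symmetry ensures that these contributions either cancel in the sum over $K\in\Tc_s$ or admit absorption against $\abs{\ub}_{b_{\Dc,s}}^2$ by a small $\varepsilon$.

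The main obstacle is precisely this last point: since the right-hand side of the desired inequality contains no jump contribution, cross terms such as $(\ubnabla\ub)_K^s:J_K^s(\ub)$ cannot be controlled by Young's inequality alone. The essential use of vertex-symmetry is to produce reflection-based cancellations that convert these otherwise uncontrolled cross terms into terms absorbable against the positive diagonal part; this is the step at which the argument fails on a generic mesh and explains why the lemma is restricted to vertex-symmetric grids.
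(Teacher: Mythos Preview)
Your argument and the paper's share the same geometric core: both hinge on showing that the subcell tensor
\[
M_K^s \;=\; \frac{1}{m_K^s}\sum_{\sigma\in\Fc_K\cap\Fc_s} m_\sigma^s\,(\aver{\xb}_{K,s}^\sigma-\xb_K)\otimes\nb_{K,\sigma}
\]
(the paper's $\Sc/m_K^s$) has positive-definite symmetric part under vertex symmetry, which gives $(\ubnabla\ub)_K^s:\bigl((\bnabla\ub)_K^s\,M_K^s\bigr)\gtrsim\bigl((\ubnabla\ub)_K^s\bigr)^2$ together with the analogous trace estimate for the $\lambda$-term. The paper, however, does not introduce your splitting into an $M_K^s$-part plus a jump part $J_K^s$; it reduces the lemma in one step to a per-subcell inequality for \emph{globally} linear displacement fields $\Pb\xb$ (its equation~(7.3)), effectively replacing $(\widetilde{\unabla}\Pi_\Cc\ub)_K^s$ by $(\widetilde{\unabla}(\Pb\xb))_K^s$ with $\Pb=(\bnabla\ub)_K^s$, and then argues by contradiction on the sign of the resulting quadratic form in $\Pb$.

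The genuine gap in your proposal is the treatment of the cross terms $(\ubnabla\ub)_K^s:J_K^s(\ub)$. Your assertion that ``vertex-symmetry ensures that these contributions either cancel in the sum over $K\in\Tc_s$ or admit absorption'' is not carried out, and the mechanism you sketch does not work as stated: for a face $\sigma$ with $\Tc_\sigma=\{K,L\}$, the jump contribution enters the $K$-summand paired against $\mu_K(\ubnabla\ub)_K^s$ and the $L$-summand against $-\mu_L(\ubnabla\ub)_L^s$, and these differ in general. Vertex symmetry is a \emph{per-subcell} reflection about the line through $\xb_K$ and $\xb_s$; it relates the two faces of a single subcell, not the two cells sharing a face, so no pairing across $\sigma$ is produced. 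Since the right-hand side of the lemma carries no jump term, Young's inequality is unavailable, exactly as you observe. You have therefore located the crux but not resolved it; the paper sidesteps the issue altogether via its direct reduction to a statement about $\Pb\xb$, never isolating $J_K^s$.
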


\begin{proof}
Since, due to \eqref{eq4.10}, the space $\Pi_{FV,s} \Hbc_\Tc$ is locally linear
on subcells, and the consistent gradient is by Definition~\ref{def3.2} exact on
linear functions, Lemma~\ref{lem7.2} therefore reduces to showing that for all
matrices $\Pb$, such that $\norm{\Pb}=1$ and for all $(K,s)\in\Tc,\Vc_K$, it holds that
\begin{multline}
\mu_K (\Pb+\Pb^T):(\widetilde{\unabla}(\Pb\xb))_K^s+\lambda_K 
\tr\Pb(\widetilde{\nabla}\cdot(\Pb\xb))_K^s
\\
\ge
\theta_{2,s}\bigl(\mu_K (\Pb+\Pb^T):(\Pb+\Pb^T)+\lambda_K (\tr\Pb)^2\bigr)
\label{eq7.3}
\end{multline}

We consider first the case $\lambda_K=0$, and consider the contradiction
\begin{equation}
(\Pb+\Pb^T):(\widetilde{\unabla}(\Pb\xb))_K^s\le0
\label{eq7.4}
\end{equation}

Using the Definition~\ref{def3.1} we obtain
\begin{equation}
(\Pb+\Pb^T):\sum_{\sigma\in\Fc_K\cap\Fc_s}m_\sigma^s
\bigl[\bigl(\Pb(\aver{\xb}_{K,s}^\sigma-\xb_K)\bigr)\times\nb_{K,\sigma}
+\nb_{K,\sigma}\times\bigl(\Pb(\aver{\xb}_{K,s}^\sigma-\xb_K)\bigr)\bigr]
\le0 
\label{eq7.5}
\end{equation}
Identifying the sum
\begin{equation}
\Sc=\sum_{\sigma\in\Fc_K\cap\Fc_s}m_\sigma^s \bigl[(\aver{\xb}_{K,s}^\sigma-\xb_K)\times\nb_{K,\sigma}\bigr]
\label{eq7.6}
\end{equation}
We re-write inequality \eqref{eq7.4} as
\begin{equation}
(\Pb+\Pb^T):[(\Sc+\Sc^T) (\Pb+\Pb^T)]\le0 \label{eq7.7}
\end{equation}

But due the assumption of vertex-symmetry, it is easy to compute that
$(\Sc+\Sc^T)$ has strictly positive eigenvalues and thus equation~\eqref{eq7.7} cannot
hold, and the contradiction is thus shown to be false. The opposite
case with $\mu_K=0$ is analogous, however here the contradiction follows
since for vertex-symmetric subcells $\sum_{\sigma\in\Fc_K\cap\Fc_s}m_\sigma^s
[(\aver{\xb}_{K,s}^\sigma-\xb_K )]\parallel\sum_{\sigma\in\Fc_K\cap\Fc_s}\nb_{K,\sigma}$. 
\end{proof}

\begin{corollary}\label{cor7.3}
For vertex-symmetric meshes, the local coercivity assumption~\ref{ass4.5} reduces to the inequality
\[
\abs{\ub}_{b_{\Dc,s}}^2\ge\theta'_{2,s} \sum_{\sigma\in\Fc_s}\frac{m_K^s}{d_{K,\sigma}^2}
\frac1{m_s^\sigma}\sum_{\beta\in\Gc_s^\sigma}\omega_\beta (\dblbr{\ub}_s^{\sigma,\beta})^2
\] 
for all $\ub\in\Pi_{FV,s} \Hbc_\Tc/\Rfr(\Omega)$, and some finite constants $\theta'_{2,s}$.
\end{corollary}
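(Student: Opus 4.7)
The plan is to derive Assumption~\ref{ass4.5} from the stated inequality by invoking Lemma~\ref{lem7.2}, which already supplies the nontrivial part of the local coercivity in the vertex-symmetric setting. Since Lemma~\ref{lem7.2} guarantees $b_{\Dc,s}(\ub,\Pi_\Cc\ub)\ge\theta_{2,s}^*\abs{\ub}_{b_{\Dc,s}}^2$ on $\Pi_{FV,s}\Hbc_\Tc/\Rfr(\Omega)$, the only ingredient still missing to reach the full form of Assumption~\ref{ass4.5} is a bound that dominates the jump seminorm. This is exactly the content of the hypothesis stated in Corollary~\ref{cor7.3}, so the proof is essentially a bookkeeping step.

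The main computation I would perform is a simple splitting. Writing the Lemma~\ref{lem7.2} bound as
\[
b_{\Dc,s}(\ub,\Pi_\Cc\ub)\ge\tfrac{1}{2}\theta_{2,s}^*\abs{\ub}_{b_{\Dc,s}}^2+\tfrac{1}{2}\theta_{2,s}^*\abs{\ub}_{b_{\Dc,s}}^2,
\]
I would apply the hypothesis of Corollary~\ref{cor7.3} to the second copy to convert it into a multiple of the jump term, and then combine to obtain
\[
b_{\Dc,s}(\ub,\Pi_\Cc\ub)\ge\tfrac{1}{2}\theta_{2,s}^*\min(1,\theta'_{2,s})\biggl(\abs{\ub}_{b_{\Dc,s}}^2+\sum_{\sigma\in\Fc_s}\frac{m_K^s}{d_{K,\sigma}^2}\frac{1}{m_s^\sigma}\sum_{\beta\in\Gc_s^\sigma}\omega_\beta(\dblbr{\ub}_s^{\sigma,\beta})^2\biggr).
\]
Setting $\theta_{2,s}:=\tfrac{1}{2}\theta_{2,s}^*\min(1,\theta'_{2,s})$ then produces Assumption~\ref{ass4.5} in exactly the required form; uniform positivity of $\theta_{2,s}$ follows from the uniform bound $\theta_2^*>0$ of Lemma~\ref{lem7.2} together with the uniform lower bound imposed on $\theta'_{2,s}$.

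The reduction is essentially algebraic, so I do not expect a serious obstacle. The one point I would handle carefully is reconciling the index of summation in the corollary's inequality (over $\sigma\in\Fc_s$, with $K$ implicit in the weight $m_K^s/d_{K,\sigma}^2$) with the double sum appearing in Assumption~\ref{ass4.5} (over $(K,\sigma)\in\Tc_s\times(\Fc_s\cap\Fc_K)$). Since each interior subface is incident to two cells and each boundary subface to one, the two indexings agree up to a uniformly bounded combinatorial factor, which can be absorbed into the constant $\theta'_{2,s}$ without affecting its uniform positivity.
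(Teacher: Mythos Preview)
Your proposal is correct and matches the paper's intent: the paper states Corollary~\ref{cor7.3} without proof, treating it as an immediate consequence of Lemma~\ref{lem7.2}, and your splitting argument is precisely the routine bookkeeping needed to fill in that gap. Your observation about reconciling the summation indices is also well taken, since the paper's own notation in the corollary is somewhat loose on this point.
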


We remark that corollary~\ref{cor7.3} states that the coercivity assumption is
equivalent to verifying that the jumps in the elements of $\Pi_{FV,s}
\Hbc_\Tc$ are bounded by the mechanical energy. This is trivially verified for
hexagons, since the discretization is exact for deformations with constant
gradients. A straightforward calculation also verifies the property for
squares and parallelograms. However, we note that this property does
not hold on equilateral triangles when the full set of quadrature points
$\Gc_s^\sigma$ is used.

\subsection{Simplex grids}\label{sec7.2}

As a consequence of section~\ref{sec7.1}, we consider the reduced integration
proposed in Section~\ref{sec6.2} for simplex grids.

\begin{lemma}\label{lem7.4}
For simplex grids the local coercivity Assumption~\ref{ass4.5} with
the reduced integration of Section~\ref{sec6.2} holds.
\end{lemma}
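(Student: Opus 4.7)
The plan is to exploit two special properties of the reduced integration on simplex grids to reduce Assumption~\ref{ass4.5} to a near-tautology: on $\Pi_{FV,s}\Hbc_\Tc$ the consistent and finite volume gradients coincide, and the subface jumps $\dblbr{\ub}_s^{\sigma,\beta}$ vanish. If both hold then $b_{\Dc,s}(\ub,\Pi_\Cc\ub)=\abs{\ub}_{b_{\Dc,s}}^2$ while the jump contribution on the right-hand side of \eqref{eq4.15} vanishes, so the inequality holds with $\theta_{2,s}=1$.

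First I would invoke the observation from Section~\ref{sec6.2} that on simplex grids (with strictly acute corners) the local mixed problem \eqref{eq4.9}--\eqref{eq4.11} remains well-posed with a single Gauss point, and that this single point acts as a point of strong continuity between the linear extensions from the two adjacent subcells. The essence of the argument is that the constraint set associated with \eqref{eq4.10} contains at least one admissible function with zero jump at the quadrature point, namely the piecewise linear interpolant constructed simultaneously on the two adjacent subcells through $\xb_K$, the Gauss point, and $\xb_s$. Since $a_{\Dc,s}$ is nonnegative and penalizes the jump directly, the minimizer $\Pi_{FV,s}\ub_\Tc$ of \eqref{eq4.11} must itself satisfy $\dblbr{\ub}_s^{\sigma,\beta}=\Ob$.

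Given vanishing jumps, the one-sided subface averages agree with the symmetric average, $\aver{\ub}_{K,s}^\sigma=\aver{\ub}_s^\sigma$ for every $K\in\Tc_\sigma$. Substituting the explicit formula \eqref{eq6.3} into Definition~\ref{def3.2} and comparing with Definition~\ref{def3.1} then shows $(\bnabla\ub)_K^s=(\widetilde{\nabla}\ub)_K^s$ on each subcell. Each term of $b_{\Dc,s}(\ub,\Pi_\Cc\ub)$ consequently collapses to its symmetric counterpart, yielding the equality $b_{\Dc,s}(\ub,\Pi_\Cc\ub)=\abs{\ub}_{b_{\Dc,s}}^2$, at which point Assumption~\ref{ass4.5} is verified with $\theta_{2,s}=1$.

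The main obstacle I anticipate is the zero-jump step: exhibiting a concrete member of the constraint set with vanishing jump at the single Gauss point relies on the specific simplex geometry. The three (in $d=2$) or four (in $d=3$) sub-faces meeting at each vertex of a simplex leave just enough geometric freedom to build one continuous piecewise linear displacement on the star $\Tc_s$ that matches all coarse data $\ub_\Tc$ and satisfies \eqref{eq4.10}; on general grids this construction fails, which is precisely the reason the reduced integration is restricted to simplices in Section~\ref{sec6.2}. Once the zero-jump admissibility is in hand, the coincidence of gradients and the conclusion of the lemma follow by direct substitution.
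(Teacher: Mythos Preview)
Your approach is essentially the paper's own: both arguments combine the two facts from Section~\ref{sec6.2}---identity~\eqref{eq6.3} makes the two discrete gradients (and hence $b_{\Dc,s}$) symmetric, and the single quadrature point acts as a point of strong continuity so that all jump terms vanish on $\Pi_{FV,s}\Hbc_\Tc$---to reduce \eqref{eq4.15} to the trivially satisfied inequality $\abs{\ub}_{b_{\Dc,s}}^2\ge\theta_{2,s}\abs{\ub}_{b_{\Dc,s}}^2$. The paper's proof is simply a terser rearrangement of yours.

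One caution on your heuristic for the zero-jump step: the admissible set in the local minimization is cut out by \emph{both} constraints \eqref{eq4.9} and \eqref{eq4.10}, and your candidate piecewise-linear interpolant through $\xb_K$, $\xb_\beta$, $\xb_s$ has no reason to satisfy the force-balance constraint~\eqref{eq4.9}. The paper does not attempt to justify strong continuity here either; it takes it as a structural feature of the reduced-integration MPSA inherited from the MPFA~O($\eta$) construction, exactly as stated in Section~\ref{sec6.2}. Since you also invoke that section directly, this is a flaw only in your auxiliary explanation, not in the proof itself.
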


\begin{proof}
Due to equation~\eqref{eq6.3} the discretization is symmetric, and the
local coercivity Assumption~\ref{ass4.5} simplifies to showing that the jump terms
are bounded, e.g.\ all $\ub\in\Pi_{FV,s} \Hbc_\Tc/\Rfr(\Omega)$
\begin{equation}
b_{\Dc,s} (\ub,\Pi_\Cc\ub)\ge\frac{\theta_{2,s}}{1-\theta_{2,s}}
\sum_{\sigma\in\Fc_s}\frac{m_K^s}{d_{K,\sigma}^2}\frac1{m_s^\sigma}
\sum_{\beta\in\Gc_s^\sigma}\omega_\beta (\dblbr{\ub}_s^{\sigma,\beta})^2 
\label{eq7.8}
\end{equation}
However, the reduced integration imposes strong continuity at the points
$\xb_\beta$ with $\beta=\Gc_\sigma^s$, and the right-hand side of equation~\eqref{eq7.8} is zero
for all $\ub\in\Pi_{FV,s} \Hbc_\Tc$. 
\end{proof}

The above shows that the local coercivity Assumption~\ref{ass4.5} always holds for
reduced integration on triangular grids. A similar situation was observed
for the scalar MPFA method in \cite{age10}. However, this does not imply that
the discretization is unconditionally convergent on triangular grids,
since as pointed out in Section~\ref{sec6.2}, the local problems \eqref{eq4.6}--\eqref{eq4.7} are
not always well-posed.

\subsection{Robustness with respect to Poisson ratio}\label{sec7.3}

It is of interest to understand the approximation quality of the method
with respect to the incompressible limit. This is known to be a challenge
for numerical methods, and is seen for e.g.\ lowest-order conforming finite
element methods on simplexes \cite{bre08}.

The standard approach to understanding the issue locking is to recognize
that in the limit $\lambda\to\infty$, the solution must satisfy the equations~\eqref{eq1.1}
with the parameter choice $\lambda=0$, subject to the constraint that
\begin{equation}
\nabla\cdot\ub=\Ob 
\label{eq7.9}
\end{equation}

This holds true both for the continuous and numerical solution. The
phenomenon of locking occurs when equation~\eqref{eq7.9} introduces more
constraints in the discrete system then the available degrees of freedom
\cite{nag74}.

For the methods discussed herein, we see that in the limit of $\lambda\to0$,
each of the local problems \eqref{eq4.9}--\eqref{eq4.11} are constrained to satisfy equation
\eqref{eq7.9}. Thus if the number of vertexes in $\Vc$, exceeds or equals the number
of degrees of freedom of the global system, given by $d$ times the number
of elements in $\Tc$, the finite volume discretization will lock. I.e., a
locking phenomena will arise if $\card(\Vc)>\card(\Tc)$. This is the case for
e.g.\ hexagons, where numerical locking has also been observed numerically
\cite{nor14b}.

To prove that locking will not occur for simplex grids, we need to
establish the existence of a Fortin operator for the method \cite{bre91}. In
essence, we need to establish the existence of an operator with the
following property.

\begin{definition}\label{def7.5}
We refer to an operator $\Pi_F\colon(H^1)^d\to\Pi_{FV,s}\Hbc_\Tc$
as a \emph{Fortin operator} if the following properties hold: 
For all $\ub\in(H_1)^d$ such that $\nabla\cdot\ub=\Ob$,
\begin{remunerate}
\item[\textup{a)}] $\bnabla_\Dc\cdot(\Pi_F\ub)=\Ob$
\item[\textup{b)}] There exists $c_F$ which does not scale as $h$ such that
\[
\abs{\Pi_\Tc \Pi_F \ub}_\Tc\le c_F \abs{\ub}_{H^1}
\]
\end{remunerate}
\end{definition}

The existence of a Fortin operator assures the robustness of the method
with respect to locking, and we recall the following lemma:

\begin{lemma}\label{lem7.6}
Let $\Pi_{FV,s}^\lambda$ explicitly represent the dependency of the
finite volume space on the parameter $\lambda$. Then if a Fortin operator exists,
it holds that
\[
\inf_{\wb\in\Pi_{FV,s}^\infty \Hbc_\Tc}\abs{\Pi_\Tc (\wb-\Pi_F\ub)}_\Tc \le c_\lambda
\inf_{\wb\in\Pi_{FV,s}^0 \Hbc_\Tc}\abs{\Pi_\Tc (\wb-\Pi_F\ub)}_\Tc 
\]
\end{lemma}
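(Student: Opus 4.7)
The plan is to follow the classical Brezzi--Fortin argument that the existence of a Fortin operator implies uniform approximability in the divergence-constrained subspace, which is the standard route to ruling out locking in the limit $\lambda\to\infty$.

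First I would observe that, by property~(a) of Definition~\ref{def7.5}, the image $\Pi_F\ub$ lies in $\Pi_{FV,s}^\infty\Hbc_\Tc$: this latter space is precisely the subspace of $\Pi_{FV,s}^0\Hbc_\Tc$ on which the consistent discrete divergence $\bnabla_\Dc\cdot$ vanishes subcell-by-subcell, which is the effect of $\lambda\to\infty$ in the local mixed problems \eqref{eq4.9}--\eqref{eq4.11}. In particular $\Pi_F\ub$ is itself an admissible candidate for the infimum on the left-hand side, so the left-hand side is well-defined and bounded by $\abs{\Pi_\Tc(\Pi_F\ub-\Pi_F\ub)}_\Tc = 0$ only when the right-hand infimum is zero; in general we must produce a correction.

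Next I would carry out the correction step. Given an arbitrary $\wb_0\in\Pi_{FV,s}^0\Hbc_\Tc$ nearly attaining the right-hand infimum, I would construct $\wb_1\in\Pi_{FV,s}^\infty\Hbc_\Tc$ of the form $\wb_1=\wb_0-\Pi_F\vb$, where $\vb\in\Hb^1(\Omega)/\Rfr(\Omega)$ is chosen so that $\nabla\cdot\vb$ matches, in a sub\-cell-averaged sense, the residual discrete divergence of $\wb_0-\Pi_F\ub$. Since the continuous divergence admits a bounded right inverse on $L^2_0(\Omega)$, such $\vb$ exists with $\abs{\vb}_{H^1}$ controlled by $\abs{\Pi_\Tc(\wb_0-\Pi_F\ub)}_\Tc$; property~(a) then places $\wb_1$ in $\Pi_{FV,s}^\infty\Hbc_\Tc$, and the stability from property~(b) gives
\[
\abs{\Pi_\Tc(\wb_1-\Pi_F\ub)}_\Tc
\le (1+c_F\,c_{\mathrm{div}})\,\abs{\Pi_\Tc(\wb_0-\Pi_F\ub)}_\Tc.
\]
Passing to the infimum over $\wb_0\in\Pi_{FV,s}^0\Hbc_\Tc$ yields the claim with $c_\lambda$ depending only on $c_F$ and the mesh regularity.

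The main obstacle is translating the continuous divergence correction into the discrete framework without incurring $h$-dependent constants. One has to track the interplay between the norms $\abs{\cdot}_\Tc$, $\abs{\cdot}_\Cc$, $\abs{\cdot}_\Dc$ and the various projection operators of Section~\ref{sec2.2}, and to verify that the Fortin stability in~(b) is strong enough in the $\abs{\cdot}_\Tc$ topology to absorb the correction obtained via the continuous right inverse of $\nabla\cdot$. Once this bookkeeping is in place, the remainder is standard Brezzi-style manipulation \cite{bre91} and the conclusion follows.
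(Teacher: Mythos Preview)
The paper's own proof of Lemma~\ref{lem7.6} is a single sentence: it defers entirely to the classical Brezzi--Fortin arguments in \cite{bre91,han02,lem14}. Your proposal is a faithful expansion of precisely those arguments, so you are following the same route the paper points to, only in more detail than the paper itself provides.

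One point worth tightening: your correction step builds $\wb_1=\wb_0-\Pi_F\vb$ and relies on choosing $\vb$ so that $\Pi_F\vb$ carries a \emph{prescribed nonzero} discrete divergence. Definition~\ref{def7.5}(a), as written, asserts only kernel preservation ($\nabla\cdot\vb=0\Rightarrow\bnabla_\Dc\cdot(\Pi_F\vb)=0$), not the stronger commuting property $\bnabla_\Dc\cdot(\Pi_F\vb)=P_h(\nabla\cdot\vb)$ that the classical Fortin lemma actually uses and that your correction step implicitly needs. The references the paper cites do formulate the Fortin operator with this commuting property, and the concrete construction in Lemma~\ref{lem7.8} (pointwise interpolation on macroedges and macrovertices, with the remaining macrocell degrees of freedom absorbing the divergence constraint) in fact delivers it. So your sketch is correct in spirit and matches the intended argument; just make explicit that you are reading Definition~\ref{def7.5} in the sense of the classical references, i.e.\ with the divergence-commuting property rather than mere kernel preservation.
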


\begin{proof}
The proof uses classical arguments found in e.g.~\cite{bre91,han02,lem14}. 
\end{proof}

Lemma~\ref{lem7.6} guarantees that the approximation qualities of the space
$\Pi_{FV,s}^\lambda \Hbc_\Tc$ does not degenerate as $\lambda\to\infty$. Our task remains to show
the conditions under which a Fortin operator exists. To this end we will
use ideas from linear solvers, where static condensation and multiscale
finite volume methods provide the right partitioning of the grid.

The finite volume methods of the type discussed herein share the property
that the discretization stencil is local, wherein ``local'' implies
that the discrete conservation law for a cell $K\in\Tc$ depends only on cells
$L\in\Tc_s$ where $s\in\Vc_K$. This allows for a partitioning the tesselation $\Tc$
into a macromesh triplet $\Dfr= \{\Ifr,\Ffr,\Bfr\}$ such that each $K\in\Tc$ belongs to either
a macrocell $\mcell\in\Ifr$, a macroedge $\medge\in\Ffr$ or a makrovertex $\mvert\in\Bfr$. We refer to
e.g.~\cite{nor08} for the details of the macro-topology, but note the important
property that any two macrocells $\mcell,\bmcell\in\Ifr$ are completely separated from
the perspective of the discretization, such that static condensation
can be performed solely dependent on the unknowns in the macroedges and
macrovertexes. Furthermore, for all vertex $s\in\Vc$ it holds that at least
one $K\in\Tc_s$ lies in a macrocell $\mcell\in\Ifr$.

\begin{definition}\label{def7.7}
A family $\Dc_n$ of mesh triplets is \emph{locally underconstrained},
if for each $n$ there exists a macromesh triplet $\Dfr_n$ satisfying the following
properties:
\begin{remunerate} 
\item The maximum number of cells in any $\mcell\in\Ifr$ is bounded independent
of $n$.
\item For each $\mcell\in\Ifr$ it holds that $d\card(\Tc_\mcell)>\card(\Vc_\mcell)$.
\end{remunerate}
\end{definition}

Note that due to Definition~\ref{def7.7}a) the macrocell diameter scales
proportionally to the mesh diameter $h$. Regular family of meshes such as
triangulations satisfy Definition~\ref{def7.7}.

\begin{lemma}\label{lem7.8}
The discrete variational method \eqref{eq3.7}--\eqref{eq3.9}, and consequently
the MPSA O-method, is robust with respect to $\lambda\to\infty$ on families of
locally underconstrained meshes.
\end{lemma}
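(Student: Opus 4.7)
By Lemma~\ref{lem7.6}, it suffices to construct a Fortin operator $\Pi_F$ satisfying Definition~\ref{def7.5}. The organizing tool will be the macromesh decomposition $\Dfr = \{\Ifr,\Ffr,\Bfr\}$ supplied by local underconstrainedness: each macrocell $\mcell \in \Ifr$ is separated from every other macrocell by the macroedge/macrovertex layer, so the discrete divergence at a vertex $s \in \Vc_\mcell$ depends only on cell-center values of cells in $\mcell$ together with the surrounding boundary layer.

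Given $\ub \in (H^1(\Omega))^d$ with $\nabla \cdot \ub = \Ob$, the plan is to build $\Pi_F \ub = \Pi_{FV} \wb$ in two stages. First, for every cell $K$ belonging to some $\medge \in \Ffr$ or some $\mvert \in \Bfr$, I would assign $\wb_K$ by a standard stable quasi-interpolation of $\ub$ (the cell average, or a Cl\'ement-type local average). Classical approximation estimates on shape-regular meshes then give a uniform bound $\abs{\wb}_\Tc \le c\,\abs{\ub}_{H^1}$ for this partial assignment. Second, for each macrocell $\mcell \in \Ifr$, the remaining $d\,\card(\Tc_\mcell)$ unknowns $\{\wb_K : K \in \Tc_\mcell\}$ are determined by imposing the $\card(\Vc_\mcell)$ discrete constraints $(\bnabla_\Dc \cdot \Pi_{FV} \wb)^s = 0$ for $s \in \Vc_\mcell$. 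By Definition~\ref{def7.7}(b) this linear system is strictly underdetermined, so among its affine space of solutions I would select the minimum-norm element; since $\Pi_{FV}$ is linear, this selection produces a linear operator.

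Property (a) of Definition~\ref{def7.5} then holds by construction. For property (b), Definition~\ref{def7.7}(a) ensures that each macrocell contains a uniformly bounded number of cells; hence the minimum-norm pseudoinverse of the local constraint matrix has operator norm bounded independently of $n$, and combining this with the stability of the first stage yields the required $\abs{\Pi_\Tc \Pi_F \ub}_\Tc \le c_F\,\abs{\ub}_{H^1}$.

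The main obstacle will be the consistency of the local linear system in the second stage: if the discrete divergence constraints on $\mcell$ are incompatible with the boundary-layer values fixed in the first stage, no solution exists and the construction breaks down. Compatibility will be established by exploiting $\nabla \cdot \ub = \Ob$ in the continuum together with the fact that $\bnabla_\Dc$ reproduces linear functions exactly, and by tuning the first-stage quasi-interpolation on the macroedge cells so that a discrete Gauss identity summed over $\Vc_\mcell$ matches the boundary flux with the interior constraints. This local adjustment will use at most a bounded number of extra degrees of freedom per macroedge and will not affect the stability bound.
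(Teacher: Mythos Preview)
Your approach is essentially the paper's: both construct a Fortin operator by fixing the cell values on the macroedge/macrovertex layer and then solving, macrocell by macrocell, for the interior cell values so that the discrete divergence vanishes, invoking Definition~\ref{def7.7}(b) for existence and Definition~\ref{def7.7}(a) together with scaling for the stability bound. The paper's entire argument is just equations~\eqref{eq7.10}--\eqref{eq7.11} plus the remark that the dimension count makes such an operator exist and the bounded macrocell size makes it stable.

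Two differences are worth recording. First, you assign the boundary-layer values by a Cl\'ement-type quasi-interpolant, whereas the paper uses pointwise evaluation $(\Pi_F\ub)_K=\ub(\xb_K)$; your choice is technically cleaner since $\ub\in(H^1(\Omega))^d$ need not have well-defined point values when $d\ge2$. Second, you flag the compatibility of the local underdetermined system as the ``main obstacle'' and propose a discrete Gauss-type adjustment of the first-stage interpolant to cure it. The paper does not raise this issue at all: it simply asserts existence from the strict inequality $d\,\card(\Tc_\mcell)>\card(\Vc_\mcell)$, which in fact guarantees solvability only if the local constraint matrix is onto. So your compatibility discussion is not a defect relative to the paper's proof---it is a refinement of a point the paper leaves implicit. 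That said, your proposed fix (tuning the quasi-interpolant on macroedges to match a discrete Gauss identity) is more elaborate than what is likely needed: the more direct route is to argue that the constraint map from interior cell values to vertex divergences is surjective, using the macromesh property that every vertex in $\Vc_\mcell$ is adjacent to at least one cell of $\Tc_\mcell$, which makes every constraint row genuinely depend on the interior unknowns. If you can establish full row rank directly, the compatibility question disappears and the continuum condition $\nabla\cdot\ub=\Ob$ plays no role in the existence step.
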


\begin{proof}
Let the operator $\Pi_F\colon(H^1)^d\to\Pi_{FV,s} \Hbc_\Tc$ be any operator
such that
\begin{equation}
(\Pi_F \ub)_K=\ub(\xb_K)\quad\text{for all }K\in\{\Ffr,\Bfr\}
\label{eq7.10}
\end{equation}
And such that
\begin{equation}
(\bnabla\cdot\ub)_K^s=0\quad\text{for all }K\in\Tc,\ s\in\Vc_K
\label{eq7.11}
\end{equation}

Due to Definition~\ref{def7.7}b) such operators exist, and due to Definition~\ref{def7.7}a) and scaling, any such operator defined consistently across the grid
sequence also satisfies Definition~\ref{def7.7}b). Thus $\Pi_F$ is a Fortin operator,
and the numerical method is robust. 
\end{proof}

Regular Cartesian lattices with Cartesian macrocell provides a limiting
case where $d\card(\Tc_\mcell)=\card(\Vc_\mcell)-1$ independent of the coarsening ratio,
and as such never satisfy Definition~\ref{def7.7}. Nevertheless, Cartesian grids
appear to be locking free based on numerical investigation \cite{nor14b}.

\section{Concluding remarks}\label{sec8}

We have expanded the hybrid finite volume framework and consider a
mixed discretization in the context of linear elasticity. We use ideas
from the variational multi-scale method to re-write the discretization
in a sub-space of cell-centered variables. By exploiting a discrete
Korn's inequality borrowed from the analysis of discontinuous Galerkin
methods we obtain global coercivity of the method, dependent only on a
locally computable coercivity condition. Convergence is then obtained
by compactness and consistency arguments. The discretization is designed
to be identical to the MPSA finite volume method recently proposed, and
convergence of that method is therefore established by the results herein.

The local coercivity conditions are simplified and verified a priori in
the setting of vertex-symmetric meshes in 2D and simplex meshes in 2D
and 3D. We furthermore establish necessary conditions for the robustness
of the scheme with respect to numerical locking using tools from mixed
methods and static condensation. Finally, we also identify a (new) finite
difference method based on the same framework, but for which no local
coercivity assumption is needed.

The analysis presented herein is fully consistent with the numerical
results presented in \cite{nor14b} on similar classes of grids, and provides a
rigorous understanding of the method.

The current work has not considered the convergence rate of the method,
and such results would necessarily require additional assumptions on
the regularity of the physical parameters $\mu$ and $\lambda$ and on the mesh
sequence $\Dc_n$.

\subsection*{Acknowledgements}
The author is currently associated with the Norwegian Academy of Science
and Letters through VISTA -- a basic research program funded by Statoil.

\bibliographystyle{siam}
\bibliography{refs}

\begin{thebibliography}{10}

\bibitem{aav02}
{\sc I.~Aavatsmark}, {\em An introduction to the multipoint flux approximations
  for quadrilateral grids}, Comput. Geosci., 6 (2002), pp.~405--432.

\bibitem{aav96}
{\sc I.~Aavatsmark, T.~Barkve, {\O}.~B{\o}e, and T.~Mannseth}, {\em
  Discretization on non-orthogonal, quadrilateral grids for inhomogeneous,
  anisotropic media}, Journal of Computational Physics, 127 (1996), pp.~2--14.

\bibitem{age10}
{\sc L.~Agelas, C.~Guichard, and R.~Masson}, {\em Convergence of finite volume
  {MPFA O} type schemes for heterogeneous anisotropic diffusion problems on
  general meshes}, International Journal on Finite Volumes,  (2010).

\bibitem{age08}
{\sc L.~Agelas and R.~Masson}, {\em Convergence of finite volume {MPFA O} type
  schemes for heterogeneous anisotropic diffusion problems on general meshes},
  Correspondances of the Royal Academy of Science, Paris, 246 (2008),
  pp.~1007--1012.

\bibitem{arb04}
{\sc T.~Arbogast}, {\em Analysis of a two-scale, locally conservative subgrid
  upscaling for elliptic problems}, SIAM Journal of Numerical Analysis, 42
  (2004), pp.~576--598.

\bibitem{bai95}
{\sc C.~Bailey and M.~Cross}, {\em A finite volume procedure to solve elastic
  solid mechanics problems in three dimensions on an unstructured mesh},
  International Journal for Numerical Methods in Engineering, 38 (1995),
  pp.~1757--1776.

\bibitem{bre03}
{\sc S.~C. Brenner}, {\em Korn's inequalities for piecewise {$H^1$} vector
  fields}, Mathematics of Computation, 73 (2003), pp.~1067--1087.

\bibitem{bre08}
{\sc S.~C. Brenner and L.~R. Scott}, {\em The mathematical theory of finite
  element methods}, Springer, New York, 2008.

\bibitem{bre91}
{\sc F.~Brezzi and M.~Fortin}, {\em Mixed and Hybrid Finite Element Methods},
  Springer Series in Computational Mathematics, 1991.

\bibitem{car14}
{\sc P.~Cardiff, A.~Karac, and A.~Ivanovic}, {\em A large strain finite volume
  method for orthotropic bodies with general material orientations},
  Computational Methods in Applied Mechanical Engineering, 268 (2014),
  pp.~318--335.

\bibitem{eig05}
{\sc G.~T. Eigestad and R.~Klausen}, {\em On the convergence of the multi-point
  flux approximation {O}-method: Numerical experiments for discontinuous
  permeability}, Numerical Methods for Partial Differential Equations, 21
  (2005), pp.~1079--1098.

\bibitem{eym06}
{\sc R.~Eymard, T.~Gallou\"et, and R.~Herbin}, {\em Finite volume methods}, in
  Handbook of Numerical Analysis, Vol VII, Elsevier, 2006, pp.~713--1020.

\bibitem{eym07}
\leavevmode\vrule height 2pt depth -1.6pt width 23pt, {\em A new finite volume
  scheme for anisotropic diffusion problems on general grids: convergence
  analysis}, Comptes rendus Math\'ematiques de l'Acad\'emie des Sciences, 344
  (2007), pp.~403--406.

\bibitem{han02}
{\sc P.~Hansbo and M.~G. Larson}, {\em Discontinuous {G}alerkin methods for
  incompressible and nearly incompressible elasticity by {N}ietche's method},
  Computational Methods in Applied Mechanical Engineering, 191 (2002),
  pp.~1895--1908.

\bibitem{hug98}
{\sc T.~J.~R. Hughes, G.~R. Feijoo, L.~Mazzei, and J.~B. Quincy}, {\em The
  variational multiscale method -- a paradigm for computational mechanics},
  Computer Methods in Applied Mechanics and Engineering, 166 (1998), pp.~3--24.

\bibitem{kik88}
{\sc K.~Kikuchi and J.~T. Oden}, {\em Contact problems in Elasticity: A study
  of variational inequalities and finite element methods}, SIAM, 1988.

\bibitem{kla08}
{\sc R.~Klausen, F.~Radu, and G.~T. Eigestad}, {\em Convergence of {MPFA} on
  triangulations and for {R}ichard's equation}, International Journal of
  Numerical Methods in Fluids, 58 (2008), pp.~1327--1351.

\bibitem{kla06}
{\sc R.~Klausen and R.~Winther}, {\em Robust convergence of multi point flux
  approximation on rough grids}, Numerische Mathematik, 104 (2006),
  pp.~317--337.

\bibitem{lem14}
{\sc S.~Lemaire}, {\em Discr\'etisations non-conformes d'un mod\`ele
  porom\'ecanique sur maillages g\'en\'eraux}, PhD thesis, University of
  Paris-East, 2013.

\bibitem{mar11}
{\sc B.~Martin and F.~Pascal}, {\em Discrete duality finite volume method
  applied to linear elasticity}, in Finite volumes for complex applications,
  VI: Problems and perspectives, no.~4 in Springer Proceedings in Mathematics,
  2011, pp.~663--671.

\bibitem{nag74}
{\sc J.~C. Nagtegaal, D.~M. Parks, and J.~R. Rice}, {\em On numerically
  accurate finite element solutions in the fully plastic range}, Computational
  Methods in Applied and Mechanical Engineering, 4 (1974), pp.~153--177.

\bibitem{nor09}
{\sc J.~M. Nordbotten}, {\em Adaptive variational multiscale methods for
  multiphase flow in porous media}, Multiscale Modeling and Simulation, 7
  (2009), pp.~1455--1473.

\bibitem{nor14b}
\leavevmode\vrule height 2pt depth -1.6pt width 23pt, {\em Cell-centered finite
  volume discretizations for deformable porous media}, International Journal of
  Numerical Methods in Engineering, 100 (2014), pp.~399--418.

\bibitem{nor14}
\leavevmode\vrule height 2pt depth -1.6pt width 23pt, {\em Finite volume
  hydro-mechanical simulation of porous media}, Water Resources Research, 50
  (2014), pp.~4379---4394.

\bibitem{nor08}
{\sc J.~M. Nordbotten and P.~Bj{\o}rstad}, {\em On the relationship between the
  multiscale finite-volume method and domain decomposition preconditioners},
  Comput. Geosci., 12 (2008), pp.~367--376.

\bibitem{wen03}
{\sc P.~Wenke and M.~A. Wheel}, {\em A finite volume method for solid mechanics
  incorporating rotational degrees of freedom}, Computers and Structures, 81
  (2003), pp.~321--329.

\bibitem{whe06}
{\sc M.~F. Wheeler and I.~Yotov}, {\em A multipoint flux mixed finite element
  method}, SIAM Journal on Numerical Analysis, 44 (2006), pp.~2082--2106.

\end{thebibliography}

\end{document}